\documentclass[11pt]{amsart}%{article}
\usepackage[T1]{fontenc}
\usepackage{lmodern,microtype,amsmath,amssymb,amsthm,mathtools}
\usepackage[margin=1in]{geometry}
\usepackage{xcolor,cite}
\usepackage[hypertexnames=false,colorlinks=true,linkcolor=blue!45!black,citecolor=blue!45!black,urlcolor=blue!45!black]{hyperref}
\hypersetup{
  pdftitle={Convergence rates in the periodic homogenization of vanishing-viscous Hamilton-Jacobi equations},
  pdfauthor={***}
}
\allowdisplaybreaks[1]
\newtheorem{theorem}{Theorem}[section]
\newtheorem{proposition}[theorem]{Proposition}
\newtheorem{lemma}[theorem]{Lemma}
\theoremstyle{remark}

\newcommand{\R}{\mathbb R}\newcommand{\Z}{\mathbb Z}
\newcommand{\T}{\mathbb T}\newcommand{\Hb}{\overline H}
\newcommand{\Lb}{\overline L}
\newcommand{\dd}{\,\mathrm d}\newcommand{\Pe}{\mathcal P_\varepsilon}
\newcommand{\HYP}{\textup{(H)}}\newcommand{\GYP}{\textup{(G)}}
\numberwithin{equation}{section}
\title[]{Convergence rates in the periodic homogenization of vanishing--viscous Hamilton--Jacobi equations}

\author{Kai Qin}
\address[K. Qin]{Qiuzhen College, Tsinghua University, Beijing 100084}
\email{qin-j21@mails.tsinghua.edu.cn}

\date{September 2026}

\begin{document}
\begin{abstract}
We study convergence rates when periodic homogenization and vanishing
viscosity occur at the same scale in Hamilton--Jacobi equations whose
momentum Hessians are positive definite at every point.
For bounded Lipschitz initial data and a class of smooth Hamiltonians,
we establish an optimal rate
$O(\varepsilon|\log\varepsilon|)$ on fixed time intervals.
The second-order term plays a key role: the elliptic cell problem
provides higher derivative bounds for the effective Hamiltonian
$\Hb$ and its Legendre dual $\Lb$.
The proof is purely PDE, based on corrector expansions and viscosity
comparison, while its guiding idea comes from the control interpretation
of the Hopf--Lax formula. For a fixed target $(x,t)$, a minimizing origin selects a characteristic
velocity and its dual momentum, these guide the construction of smooth comparison profiles, whose gradients supply the momentum argument of the first-order corrector.
Thus we never need to differentiate the effective solution, even at points where it is nonsmooth.
\end{abstract}
\maketitle

\noindent\textbf{Keywords:} Periodic homogenization; vanishing viscosity;
Hamilton--Jacobi equations; convergence rates; effective Hamiltonian;
Hopf--Lax formula.

\tableofcontents
\clearpage

\section{Introduction}\label{sec:intro}
\subsection{Setting and main idea}\label{subsec:setting}
Fix an integer $n\ge1$ and let $\T^n=\R^n/\Z^n$.
Write $B_r=\{p\in\R^n:|p|<r\}$ and
$\overline B_r=\{p\in\R^n:|p|\le r\}$.
The variable $y\in\T^n$ is periodic position, and $p\in\R^n$
is the gradient argument. We write $H^*$ for the original
Hamiltonian and $H$ for its fixed modification. We assume
\begin{equation}\tag{H}\label{hyp:H}
\begin{gathered}
 H^*\in C^3(\T^n\times\R^n),\qquad H^*_{pp}(y,p)>0,\\
 \lim_{r\to\infty}\inf_{\substack{y\in\T^n\\|p|\ge r}}
 \bigl\{H^*(y,p)^2+nH^*_y(y,p)\cdot p\bigr\}=+\infty.
\end{gathered}
\end{equation}
Here $H^*_y=D_yH^*$ and $H^*_{pp}=D^2_{pp}H^*$. For a fixed $K\ge0$, the initial datum satisfies
\begin{equation}\tag{G}\label{hyp:G}
 g\in L^\infty(\R^n),\qquad
 |g(z)-g(z')|\le K|z-z'|\quad(z,z'\in\R^n).
\end{equation}
The effective Hamiltonians of $H^*$ and $H$ are denoted by
$\Hb^*$ and $\Hb$, respectively; their cell problems are specified
at the beginning of Section~\ref{sec:prelim}.
Our first step replaces $H^*$ by a uniformly convex Hamiltonian,
with an error of order $\varepsilon$.

\begin{theorem}\label{thm:reduction}
Assume \HYP--\GYP. There exist $R=R(H^*,n,K)>K+1$ and
$H\in C^3(\T^n\times\R^n)$, both independent of $\varepsilon$, such that
\begin{equation}\label{eq:reduction-H}
 H=H^*\ \text{on }\T^n\times\overline B_R,\qquad
 0<\lambda I\le H_{pp}\le\Lambda I,\qquad |H_y|\le C_H.
\end{equation}
Here $I$ is the identity matrix, and $\lambda,\Lambda,C_H>0$
depend only on $H^*,n,K$. For every $0<\varepsilon\le1$, the following
four Cauchy problems on $\R^n\times(0,\infty)$ have unique viscosity
solutions that are bounded on $\R^n\times[0,T]$ for every $T>0$:
\begin{subequations}\label{eq:four-equations}
\begin{align}
 U_t^\varepsilon+H^*(x/\varepsilon,DU^\varepsilon)
 &=\tfrac\varepsilon2\Delta U^\varepsilon,
 &U^\varepsilon(\cdot,0)&=g,\label{eq:pde-original}\\
 U_t+\Hb^*(DU)&=0,&U(\cdot,0)&=g,\label{eq:effective-original}\\
 u_t^\varepsilon+H(x/\varepsilon,Du^\varepsilon)
 &=\tfrac\varepsilon2\Delta u^\varepsilon,
 &u^\varepsilon(\cdot,0)&=g,\label{eq:reduced-equations}\\
 u_t+\Hb(Du)&=0,&u(\cdot,0)&=g.\label{eq:modified-effective}
\end{align}
\end{subequations}
Moreover,
\begin{equation}\label{eq:reduction-error}
 U=u,\qquad \operatorname{Lip}_x u(\cdot,t)\le K\quad(t\ge0),\qquad
 \|U^\varepsilon-u^\varepsilon\|_{L^\infty(\R^n\times[0,\infty))}
 \le2K\varepsilon.
\end{equation}
\end{theorem}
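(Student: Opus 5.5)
The argument has three parts: gradient bounds for $U^\varepsilon$ that hold uniformly in $\varepsilon$, a modification of $H^*$ outside a large ball, and an identification of the solutions. The guiding fact is that the coercivity condition in \HYP{} is exactly what the Bernstein method needs to bound $|DU^\varepsilon|$ in terms of $K$ only. Once that bound is available, changing $H^*$ where $|p|$ is large does not affect the $\varepsilon$-problem.

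\textbf{Step 1: a Bernstein bound.} I want $R_0=R_0(H^*,n,K)$ such that $|DU^\varepsilon|\le R_0$ for every $0<\varepsilon\le1$. I would work with smooth bounded solutions, first mollifying $g$, which keeps $\operatorname{Lip} g\le K$, and then pass to the limit by stability.

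Set $w=\tfrac12|DU^\varepsilon|^2$ and differentiate the equation. This gives
\[
w_t+H^*_p\cdot Dw+\tfrac1\varepsilon H^*_y\cdot DU^\varepsilon=\tfrac\varepsilon2\Delta w-\tfrac\varepsilon2|D^2U^\varepsilon|^2 .
\]
The $1/\varepsilon$ factor on $H^*_y$ is controlled by the second-derivative term. By Cauchy--Schwarz, $|D^2U^\varepsilon|^2\ge(\Delta U^\varepsilon)^2/n$, and the equation gives $\tfrac\varepsilon2\Delta U^\varepsilon=U^\varepsilon_t+H^*$. Consequently
\[
\tfrac\varepsilon2|D^2U^\varepsilon|^2\ge\tfrac{2}{n\varepsilon}\,(U^\varepsilon_t+H^*)^2 .
\]
I also need $U^\varepsilon_t$ bounded. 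Comparison with the barriers $g\pm Ct$ is not directly available because $g$ is only Lipschitz, so I would mollify $g$ at scale $\varepsilon$, which costs $O(K\varepsilon)$, or work with $\tfrac{1}{h}\bigl(U^\varepsilon(t+h)-U^\varepsilon(t)\bigr)$. Either way, a maximum-principle argument at an interior maximum of $w$ produces
\[
\tfrac{1}{n\varepsilon}\bigl\{H^*(y,p)^2+nH^*_y\cdot p\bigr\}\lesssim \tfrac{1}{\varepsilon}\,(\text{lower order}),
\]
evaluated at $y=x/\varepsilon$, $p=DU^\varepsilon$. The coercivity hypothesis in \HYP{} then forces $|p|\le R_0$. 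The initial values satisfy $w\le K^2/2$.

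\textbf{Step 2: the modification.} Take $R>\max(R_0,K+1)$ large enough that it also dominates the Lipschitz bound of the effective solution, namely $K$. I would define
\[
H=\chi H^*+(1-\chi)\bigl(\Lambda_0|p|^2+c\bigr),
\]
with a cutoff $\chi$ equal to $1$ on $\overline B_R$ and vanishing outside $B_{2R}$. This naive gluing can lose convexity in the transition region. A cleaner choice is $H=\max_\delta\bigl(H^*,\ \mu|p|^2-M\bigr)$, a smoothed maximum, with $\mu$ small and $M$ large so that $H=H^*$ on $\overline B_R$ and $H=\mu|p|^2-M$ for $|p|$ large. A smoothed maximum of convex $C^3$ functions is convex and $C^3$. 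I get uniform convexity on compact sets from $H^*_{pp}>0$ and uniform convexity at infinity from $\mu|p|^2$; the transition annulus is compact, which gives $\lambda,\Lambda$. The bound $|H_y|\le C_H$ holds because $H_y$ vanishes for large $|p|$.

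\textbf{Step 3: existence, uniqueness and the identities.} Existence and uniqueness of bounded solutions for the $H$-problems are standard, since $H$ is uniformly convex with $H_y$ bounded. For the effective problem I would use the Hopf--Lax formula, which also gives $\operatorname{Lip}_x u\le K$. By Step 1, $u^\varepsilon$ has gradient at most $R_0$; since $H=H^*$ there, $u^\varepsilon$ also solves the $H^*$-problem. For uniqueness of the $H^*$-problems, any bounded solution is Lipschitz by the same Bernstein bound, so comparison within the Lipschitz class applies.

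Similarly, $\Hb=\Hb^*$ on $\overline B_K$. This follows because cell correctors for $|P|\le K$ have gradients bounded by $R$, again by the Bernstein argument for the cell problem, and the bound must be chosen uniformly. Since $u$ is $K$-Lipschitz, it solves the $\Hb^*$ equation, and therefore $U=u$.

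The discrepancy $2K\varepsilon$ is the residue of the Step 1 regularization. If the bound is exact, then $U^\varepsilon=u^\varepsilon$. The error instead comes from comparing solutions with $g$ and with the mollified $g_\varepsilon$, since $\|g-g_\varepsilon\|_\infty\le K\varepsilon$, applied on both sides.

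\textbf{Main obstacle.} I expect the hardest part to be making the Bernstein bound rigorous and uniform in $\varepsilon$ on an unbounded domain with merely Lipschitz data. My first attempt would be to use a localized maximum point, via a penalization $-\eta|x|^2$, together with the time-derivative bound for the mollified data.
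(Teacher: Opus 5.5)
Most of your plan matches the paper's argument. You mollify $g$ at scale $\varepsilon$ so the smooth-data solution has an $\varepsilon$-uniform time-derivative bound $M$. You run Bernstein on the quantity $H^{*2}+nH^*_y\cdot p$ and take the radius from the coercivity in \HYP. You get $\Hb=\Hb^*$ on $\overline B_K$ from the stationary Bernstein bound for the cell correctors. Finally, you pay $K\varepsilon$ twice for replacing $g$ by its mollification.

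The genuine gap is your Step~2. \HYP\ places no upper growth bound on $H^*$. For instance, $H^*(y,p)=|p|^4+|p|^2+V(y)$ with $V\in C^3(\T^n)$ satisfies it. For such $H^*$ no quadratic $\mu|p|^2-M$ lies above $H^*$ for large $|p|$, so your smoothed maximum equals $H^*$ near infinity. Then $H_{pp}\le\Lambda I$ fails, and the claim that $H_y$ vanishes for large $|p|$ is false. Even for quadratically growing $H^*$ you would need $\mu$ large, not small. No maximum-type construction can work: a Hamiltonian with $H_{pp}\le\Lambda I$ must lie \emph{below} a superquadratic $H^*$ far out, so $H^*$ has to be cut off.

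The paper repairs exactly your first ``naive gluing'' by adding a large convex barrier vanishing on the ball. It sets $H^r=\zeta H^*+A_rJ_r$, where $\zeta=1$ on $\overline B_{r+1}$ and $\zeta=0$ outside $B_{r+2}$. Here $J_r$ is convex and radial, vanishes on $\overline B_r$, has bounded Hessian, and satisfies $D^2J_r\ge\mu_rI$ outside $B_{r+1}$. This works region by region:
\begin{itemize}
\item on $\overline B_{r+1}$ the Hessian is at least $H^*_{pp}$, which is bounded below by compactness;
\item on the compact annulus, the possibly indefinite Hessian of $\zeta H^*$ is absorbed by $A_r\mu_r$ for large $A_r$;
\item outside $B_{r+2}$ one has $H^r=A_rJ_r$;
\item everywhere, $H^r_y=\zeta H^*_y$ is bounded.
\end{itemize}

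There is also an order-of-logic problem in Steps~1 and~3. You derive the gradient bound for $U^\varepsilon$, but classical solvability of the $H^*$-problem is not available a priori. Nor can the bound be quoted for the $H$-problem, since $H\ne H^*$ outside $\overline B_R$.

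The paper works the other way. It fixes $H=H^{2R}$ and solves the well-posed $H$-problem with data $f_\varepsilon=\rho_\varepsilon*g$, calling the solution $\omega^\varepsilon$. The barriers $f_\varepsilon\pm Mt$ give $|\omega^\varepsilon_t|\le M$. It then runs Bernstein only until $|D\omega^\varepsilon|$ could first reach $2R$. Up to that time the equation is the $H^*$-equation, so the gradient stays at most $R$ and never reaches $2R$. Only then is $\omega^\varepsilon$ seen to solve the $H^*$-problem as well.

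Two related points. First, the bound holds for the mollified-data solution, not for $u^\varepsilon$ itself: with data $g$ there is no $\varepsilon$-uniform time-derivative bound. Second, ``any bounded solution is Lipschitz by the same Bernstein bound'' is not a uniqueness argument. Bernstein is an a priori estimate for classical solutions, and transferring it to an arbitrary viscosity solution already presupposes comparison.
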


The second-order term makes the cell problem uniformly elliptic.
Under the following assumptions, this gives $C^3$ regularity of the
effective Hamiltonian and its convex dual.
\begin{theorem}\label{thm:cell}
Suppose that, for some $\lambda,\Lambda,C_H>0$,
\begin{equation}\label{eq:uniform-structure}
 H\in C^3(\T^n\times\R^n),\qquad
 0<\lambda I\le H_{pp}\le\Lambda I,\qquad
 |H_y(y,p)|\le C_H(1+|p|^2).
\end{equation}
For every $p\in\R^n$, the cell problem \eqref{eq:cell} has a unique
normalized classical pair $(\chi(\cdot,p),\Hb(p))$.
Let $\Lb(q)=\sup_{p\in\R^n}\{p\cdot q-\Hb(p)\}$ be its convex dual. Then
\[
 \chi\in C^2(\T^n\times\R^n),\qquad \Hb,\Lb\in C^3(\R^n),
 \qquad D^2\Hb>0,\quad D^2\Lb>0.
\]
In particular, this applies to the Hamiltonian in
Theorem~\ref{thm:reduction}.
\end{theorem}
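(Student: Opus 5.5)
The cell problem, presumably of the form $\tfrac12\Delta_y\chi+\Hb(p)=H(y,p+D_y\chi)$ on $\T^n$ with normalization $\chi(0,p)=0$ or $\int_{\T^n}\chi=0$, will be solved for each fixed $p$ by the standard discounted approximation. For $\delta>0$ I solve $\delta v^\delta-\tfrac12\Delta v^\delta+H(y,p+Dv^\delta)=0$ on $\T^n$. Existence comes from sub/supersolutions $\mp\delta^{-1}\max_y|H(y,p)|$ and Perron's method, and smoothness from uniform ellipticity. The key a priori bound is a gradient estimate uniform in $\delta$, which I get by Bernstein's method. Set $w=|Dv^\delta|^2$ and evaluate at a maximum point. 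Using $H_{pp}\ge\lambda I$ in the differentiated equation,
\[
\tfrac12\Delta w - H_p\cdot Dw-2\delta w = 2H_y\cdot Dv^\delta+|D^2v^\delta|^2 .
\]
Combined with $|D^2 v|^2\ge (\Delta v)^2/n$ and the equation itself, which gives $\Delta v = 2(\delta v+H)$ with $H\ge \tfrac\lambda2|p+Dv|^2-C$, this yields $|\Delta v|^2\gtrsim|Dv|^4$ at large $|Dv|$. That dominates $|H_y||Dv|\le C_H(1+|p+Dv|^2)|Dv|$, so $|Dv^\delta|\le C(1+|p|)$. Then $\delta v^\delta$ is bounded, $v^\delta-v^\delta(0)$ is bounded, and Schauder estimates for $\tfrac12\Delta v=\delta v+H(y,p+Dv)$ give $C^{2,\alpha}$ bounds. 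Passing to the limit $\delta v^\delta\to-\Hb(p)$ along a subsequence produces a classical pair. Uniqueness of $\Hb(p)$ follows from the strong maximum principle applied to the difference of two correctors, which satisfies a linear elliptic equation $\tfrac12\Delta w - b\cdot Dw = c$ with constant $c$. The same argument shows $\chi$ is unique up to constants.

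Regularity in $p$ comes from the implicit function theorem applied to $F(\chi,c,p)=\tfrac12\Delta\chi-H(y,p+D\chi)+c$. I take $F\colon (C^{2,\alpha}(\T^n)/\text{normalized})\times\R\times\R^n\to C^{\alpha}(\T^n)$. Since $H\in C^3$, the map $F$ is $C^2$ in $(\chi,p)$; the substitution operator loses one derivative, which is why $F$ is $C^2$ rather than $C^3$. The linearization in $(\chi,c)$ is
\[
(\phi,k)\mapsto \tfrac12\Delta\phi-H_p(y,p+D\chi)\cdot D\phi+k .
\]
By the Fredholm alternative, this operator $L$ has one-dimensional kernel (constants) and its range is the annihilator of the positive invariant density $m$, where $L^*m=0$ and $\int m=1$. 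Hence $(\phi,k)\mapsto L\phi+k$ is an isomorphism on normalized $\phi$, and the implicit function theorem gives $p\mapsto(\chi,\Hb)$ of class $C^2$. In particular $\chi\in C^2(\T^n\times\R^n)$ and $\Hb\in C^2$.

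To get $\Hb\in C^3$ I use the first-variation formula. Differentiating the cell problem in $p$ and integrating against $m$ gives
\[
D\Hb(p)=\int H_p(y,p+D\chi)\,m\,dy .
\]
Since $H_p\in C^2$, $D\chi$ is $C^2$ in $p$ as a $C^{1,\alpha}$-valued map, and $m$ is $C^2$ in $p$ (adjoint problem, same implicit function argument), we get $D\Hb\in C^2$, so $\Hb\in C^3$. Differentiating once more gives
\[
D^2\Hb(p)\xi\cdot\xi=\int H_{pp}(y,p+D\chi)(\xi+D\chi_\xi)\cdot(\xi+D\chi_\xi)\,m\,dy,
\]
where $\chi_\xi=D_p\chi\,\xi$. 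The remaining terms cancel by the linearized equation tested against $m$. This integral is at least $\lambda\int|\xi+D\chi_\xi|^2m\,dy$. The integral $\int (\xi+D\chi_\xi)\,m$ need not equal $\xi$, so I will instead use Jensen in the form $\int|\xi+D\chi_\xi|^2 m\ge|\xi+\int D\chi_\xi m|^2$. If that vanishes, I argue directly: $D^2\Hb\xi\cdot\xi=0$ forces $D\chi_\xi\equiv-\xi$ on the support of $m$. By Harnack, $m>0$, so $\chi_\xi+\xi\cdot y$ is constant, which contradicts periodicity when $\xi\neq0$. Hence $D^2\Hb>0$.

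The dual $\Lb$ needs $\Hb$ to be superlinear and $D\Hb$ to be a global diffeomorphism. From $H_{pp}\ge\lambda I$ one gets $\Hb(p)\ge\tfrac\lambda2|p|^2-C$, by testing the cell problem at the minimum of $\chi$. By the inverse function theorem with $D^2\Hb>0$, $\Lb\in C^3$ and $D^2\Lb=(D^2\Hb)^{-1}\circ D\Lb>0$. The main obstacle I expect is the bookkeeping that yields exactly $C^3$ for $\Hb$, and hence $\Lb$, from only $C^3$ of $H$. This rests on the first-variation formula for $D\Hb$ and the $C^2$ dependence of $m$. If that proves awkward, I would instead differentiate the equation three times in $p$ directly, applying Schauder estimates to each derivative.
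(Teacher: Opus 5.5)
Your argument is correct, but it gets the third derivative of $\Hb$ and the positivity of $D^2\Hb$ differently from the paper.

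\textbf{The paper's route.} The paper cites the existence theory for the cell pair. It then applies the implicit function theorem to $\mathcal F(f,d,p)=-\tfrac12\Delta_yf+H(y,p+D_yf)-d$, mapping the mean-zero subspace of $W^{2,\ell}(\T^n)$, $\ell>n$, into $L^\ell(\T^n)$. In this setting $D_yf$ enters only through $C^0$, so the substitution operator inherits full $C^3$ regularity from $H$. A single application therefore gives $p\mapsto(\chi,\Hb)$ of class $C^3$, and joint continuity of $D_y^2\chi$ is recovered afterwards by a $W^{3,\ell}$ Poisson estimate. Positivity comes from the maximum principle applied to the twice-differentiated equation $\mathcal A_pb+Q^TH_{pp}Q=\xi^TD^2\Hb\xi$, together with $\int_{\T^n}Q\dd y=\xi$.

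\textbf{Your route.} You prove existence yourself by the discount method and a Bernstein estimate. You work in $C^{2,\alpha}\to C^\alpha$, where, as you correctly note, the substitution operator costs one derivative. You then recover $\Hb\in C^3$ from $D\Hb(p)=\int H_p(y,p+D_y\chi)\,m\dd y$, and positivity from $\xi^TD^2\Hb\xi=\int H_{pp}[Q,Q]\,m\dd y$ with $m>0$.

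\textbf{What each buys.} Your route gives joint $C^2$ regularity of $\chi$ directly, plus a quantitative convexity bound $\xi^TD^2\Hb\xi\ge\lambda\int|Q|^2m\dd y$. The price is a second implicit-function argument for $m$. The paper's Sobolev setting avoids the adjoint problem entirely.

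\textbf{Caveats.} The second implicit-function argument does not carry over verbatim.
\begin{itemize}
\item In strong H\"older form, the adjoint operator contains $\operatorname{div}_y b_p$, which involves $H_{pp}(y,p+D_y\chi)\,D_y^2\chi$ and loses a derivative again. Set it up in divergence form instead, e.g.\ $m\mapsto\bigl(\tfrac12\Delta m+\operatorname{div}(b_pm),\int m\bigr)$ from $W^{1,2}(\T^n)$ into $W^{-1,2}(\T^n)\times\R$. Then $b_p=H_p(\cdot,p+D_y\chi)$ is needed only as a $C^2$ map into $L^\infty$, which it is because $H_p\in C^2$ and substitution into $C^0$ loses nothing.
\item For the same reason, your fallback of differentiating three times in $p$ would not close with Schauder estimates. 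The third-order equation contains $H_{ppp}(y,p+D_y\chi)$, which is only continuous in $y$. With $L^\ell$ estimates instead, that fallback becomes the paper's proof.
\end{itemize}

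\textbf{Small slips.} The lower bound $\Hb(p)\ge\min_yH(y,p)$ comes from a maximum of $\chi$, where $\Delta\chi\le0$, not from a minimum. The Jensen step is superfluous. What closes the argument is that $m>0$ forces $Q\equiv0$, which contradicts periodicity.
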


Our main result is the following global rate.
\begin{theorem}\label{thm:global}
Under \HYP--\GYP, there is $C=C(H^*,n,K)$ such that
\begin{equation}\label{eq:global}
 |U^\varepsilon(x,t)-U(x,t)|
 \le C\varepsilon\bigl[1+\log(1+t/\varepsilon)\bigr]
 \quad(x\in\R^n,\ t\ge0,\ 0<\varepsilon\le1).
\end{equation}
\end{theorem}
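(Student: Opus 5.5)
By Theorem~\ref{thm:reduction} we have $U=u$ and $\|U^\varepsilon-u^\varepsilon\|_\infty\le 2K\varepsilon$. It therefore suffices to prove
\[
|u^\varepsilon-u|\le C\varepsilon\bigl[1+\log(1+t/\varepsilon)\bigr]
\]
for the modified uniformly convex $H$. We may then use Theorem~\ref{thm:cell}: the corrector $\chi\in C^2$, $\Hb,\Lb\in C^3$ with $D^2\Hb,D^2\Lb>0$. Since $\operatorname{Lip}_x u\le K<R$, only momenta in a compact ball matter, so all derivative bounds on $\chi,\Hb,\Lb$ are uniform there. For $t\le\varepsilon$ the estimate is trivial: both $u^\varepsilon$ and $u$ stay within $Ct$ of $g$, because $g$ is Lipschitz and $H$ is bounded on the relevant gradients. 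Hence we may assume $t\ge\varepsilon$.

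The representation I plan to use is the Hopf--Lax formula
\[
u(x,t)=\min_z\bigl\{g(z)+t\Lb\bigl((x-z)/t\bigr)\bigr\}.
\]
For the \textbf{upper bound} $u^\varepsilon\le u+\text{error}$ at a fixed target $(x_0,t_0)$, I pick a minimizing origin $z_0$, the velocity $q_0=(x_0-z_0)/t_0$, and the dual momentum $p_0=D\Lb(q_0)$ with $|p_0|\le K$. I would not use the affine profile directly. Instead I take a smooth comparison profile in the spirit of Hopf--Lax, whose gradient stays near $p_0$ and which dominates $g$ at time $0$:
\[
\phi(x,t)=g(z_0)+(t+\varepsilon)\Lb\Bigl(\frac{x-z_0}{t+\varepsilon}\Bigr)+C\varepsilon .
\]
This is a classical solution of $\phi_t+\Hb(D\phi)=0$. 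Its Hessian is $D^2\Lb/(t+\varepsilon)$, so it is $O(1/(t+\varepsilon))$. To make the time-zero inequality $\phi(\cdot,0)\ge g$ hold, I would combine the Lipschitz bound of $g$ with superlinearity of $\Lb$; if that fails for large $|x-z_0|$, I would replace $\Lb$ by a convex regularization or use finite speed of propagation to localize. The test function is
\[
\psi(x,t)=\phi(x,t)+\varepsilon\chi\bigl(x/\varepsilon,D\phi(x,t)\bigr).
\]
Plugging it into the viscous operator, the $O(1)$ terms cancel by the cell problem
\[
H(y,p+D_y\chi)-\tfrac12\Delta_y\chi=\Hb(p).
\]
The remaining errors involve $\varepsilon\,D_p\chi\,D^2\phi$, $\varepsilon\,\Delta\phi$, and the cross terms $D_yD_p\chi\,D^2\phi$. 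All of these are bounded by $C\varepsilon|D^2\phi|+C\varepsilon|D_t D\phi|\le C\varepsilon/(t+\varepsilon)$, using $\chi\in C^2$ in $(y,p)$.

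Because of the cross terms, the estimate needs $\chi\in C^2$ jointly, which Theorem~\ref{thm:cell} provides. Adding the correction
\[
C\varepsilon\log\bigl(1+t/\varepsilon\bigr)=\int_0^t \frac{C\varepsilon}{s+\varepsilon}\,ds
\]
turns $\psi$ into a supersolution of the viscous equation. At $t=0$ it dominates $g$ up to $\varepsilon\|\chi\|_\infty$. Comparison then gives
\[
u^\varepsilon(x_0,t_0)\le\psi(x_0,t_0)+C\varepsilon\log(1+t_0/\varepsilon),
\]
and $\phi(x_0,t_0)=u(x_0,t_0)+O(\varepsilon)$ because $\Lb$ is smooth and we shifted time by only $\varepsilon$. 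This yields the upper bound.

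The \textbf{lower bound} $u^\varepsilon\ge u-\text{error}$ is the main obstacle, since $u$ is a minimum and the origin is not selected from the target's side in the same way. My plan is to work with each fixed origin $z$ and compare $u^\varepsilon$ from below with the corrected profile
\[
\phi_z=g(z)+(t+\varepsilon)\Lb\Bigl(\frac{x-z}{t+\varepsilon}\Bigr)
\]
only in the appropriate sense. What I would try first is a dual construction: build subsolutions via concave-in-space profiles tied to the momentum $p_0=Du$ where it exists. More robustly, I would use semigroup and convexity structure. By the comparison principle, $u^\varepsilon$ for data $g$ is at least the infimum over $z$ of the solutions with data $g(z)+K|\cdot-z|$, smoothed at scale $\varepsilon$ with an $O(\varepsilon)$ cost. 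The infimum of solutions is not a solution, however, so I would instead obtain the lower bound from the control (stochastic) representation. This is legitimate since $H$ is convex: $u^\varepsilon$ is the value function of a controlled diffusion with running cost $L(x/\varepsilon,\cdot)$. Plugging in an arbitrary control, together with a corrector-based argument for the dual Lagrangian cell problem, gives $u^\varepsilon\ge u-C\varepsilon\log(1+t/\varepsilon)$ by the same $1/(s+\varepsilon)$ integration along the optimal characteristic direction.

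Keeping the lower-bound argument purely PDE, as the abstract insists, is where I expect most of the work: the target fixes the momentum $p_0$, and the profile must be a subsolution globally.
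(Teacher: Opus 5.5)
Your reduction to $H$ and your upper-bound profile match the paper's Proposition~\ref{prop:upper} with $\delta=\varepsilon$. That profile is $g(z_0)+(s+\varepsilon)\Lb((z-z_0)/(s+\varepsilon))$, plus $\varepsilon\chi(z/\varepsilon,D\phi)$ and the integrated error $C\varepsilon\log(1+s/\varepsilon)$. The genuine gap is the lower bound $u-u^\varepsilon\le C\varepsilon[1+\log(1+t/\varepsilon)]$: you do not prove it, and the routes you suggest point the wrong way.

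\begin{itemize}
\item Since $g\le g(z)+K|\cdot-z|$ for each $z$, comparison gives $u^\varepsilon\le$ every solution with such data. Their infimum therefore bounds $u^\varepsilon$ from \emph{above}, not below.
\item In the control representation $u^\varepsilon$ is an infimum over controls, so inserting a particular control again bounds $u^\varepsilon$ from above. A lower bound must hold for every control, which via It\^o's formula amounts to producing a smooth global subsolution. That is exactly the object you never construct.
\item Profiles tied to $Du$ fail precisely where $u$ is not differentiable.
\end{itemize}

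The missing idea is to run the Hopf--Lax cone backward from the target. By \eqref{eq:HL}, $u(x,t)-t\Lb((x-z)/t)\le g(z)$ for \emph{every} $z$, so no minimizer is needed. Moreover, $(z,s)\mapsto u(x,t)-(t-s)\Lb((x-z)/(t-s))$ is smooth, concave in $z$, and solves the effective equation. The paper regularizes it as in \eqref{eq:negative-cone}:
\begin{itemize}
\item $t-s$ becomes $t+\delta-s$;
\item $\Lb$ is replaced by the cap $\Phi$ of Lemma~\ref{lem:cap}, which keeps the momentum argument of $\chi$ in a compact set and has the correct sign $\Phi-q\cdot D\Phi+\Hb(D\Phi)=-j(D\Phi)\le0$ for a subsolution;
\item after adding the corrector, it subtracts $C_0(\delta+\varepsilon)$ and $C_0\int_0^s[\varepsilon/(t+\delta-\sigma)+\varepsilon^2/(t+\delta-\sigma)^2]\dd\sigma$.
\end{itemize}
The initial inequality follows from \eqref{eq:cap-growth}, which forces $|x-z|<R_1t$ wherever $v_{1,\delta}(z,0)>g(z)$, together with \eqref{eq:time-shift-estimate}. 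Comparison then gives \eqref{eq:master-lower}.

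Your upper bound also lacks a localization. The corrector is evaluated at $D\phi=D\Lb((z-z_0)/(s+\varepsilon))$, which is unbounded in $z$. The bound $\operatorname{Lip}_xu\le K$ says nothing about this momentum, and \eqref{eq:compact-derivative-bounds} controls $\chi,\chi_p,\chi_{yp},\chi_{pp}$ only on compact momentum sets. The time-zero inequality for $\phi$ itself does hold globally, since $\Lb(q)\ge K|q|-\sup_{|p|\le K}|\Hb(p)|$; what fails far from $z_0$ is control of the corrector.

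Your two suggested fixes do not work as stated. Finite speed of propagation is unavailable for the viscous equation. Replacing $\Lb$ by a cap like $\Phi$ yields $\phi_s+\Hb(D\phi)=-j(D\Phi)\le0$, the wrong inequality for a supersolution. The paper instead uses the corrected profile only on $|z-z_*|<R_1(s+\delta)$ and takes its minimum with the exterior supersolution $\widehat w$ of \eqref{eq:upper-exterior}. The second condition in \eqref{eq:fixed-velocity-radius} makes $\widehat w<w_2$ on the moving interface, so the patched function is a global supersolution.

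Two smaller points:
\begin{itemize}
\item The residual also contains $\varepsilon^2$ terms, such as $\varepsilon^2\chi_{pp}:(D^2\phi)^2$ and $\varepsilon^2\chi_p\cdot D\Delta\phi$. The latter needs $\Lb\in C^3$. Both are harmless, since $\varepsilon^2/(s+\varepsilon)^2\le\varepsilon/(s+\varepsilon)$.
\item For merely Lipschitz data, $|u^\varepsilon-g|\le Ct$ is not available, because the diffusion contributes order $\sqrt{\varepsilon t}$. Still, \eqref{eq:heat-control} gives $O(\varepsilon)$ for $t\le\varepsilon$, which is all you need.
\end{itemize}
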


The constant $C(H^*,n,K)$ is independent of $\varepsilon,x,t$ and
of the datum $g$ beyond its Lipschitz bound $K$.
The rate proof has constants $C(H,n,K)$. Since $H$ is fixed by
$H^*,n,K$ in Theorem~\ref{thm:reduction}, they give the stated
constant $C(H^*,n,K)$.

For the main idea, fix a target $(x,t)$ with $t>0$ and a minimizing
point $z_*$ in the Hopf--Lax formula for $u$.
Consider the two $C^3$ profiles
\begin{align*}
 v_1(z,s)&=u(x,t)-(t-s)\Lb\!\left(\frac{x-z}{t-s}\right),
 &&0\le s<t,\\
 v_2(z,s)&=g(z_*)+s\Lb\!\left(\frac{z-z_*}{s}\right),
 &&s>0.
\end{align*}
The Hopf--Lax formula gives
\[
 v_1(z,s)\le u(z,s)\le v_2(z,s)\quad(0<s<t),\qquad
 \lim_{s\uparrow t}v_1(x,s)=u(x,t)=v_2(x,t).
\]
The minimizing point $z_*$ is the origin of an effective
characteristic reaching $(x,t)$. Its velocity is $(x-z_*)/t$, and
the upper profile supplies the corresponding momentum
$D_zv_2(x,t)=D\Lb((x-z_*)/t)$.
Thus the corrector can use smooth profile gradients even where
$Du(x,t)$ does not exist.
Away from their singular times, both profiles solve
$\partial_s v_i+\Hb(D_zv_i)=0$.
When the required derivatives are continuous and bounded, adding
the cell corrector and expanding the equation gives the homogenization
estimate. This suggests comparing
\[
 v_i(z,s)+\varepsilon\chi(z/\varepsilon,D_zv_i(z,s)),\qquad i=1,2,
\]
with $u^\varepsilon$ from below and above.

To handle the singularities, we add $\delta>0$ to the time factors
$t-s$ and $s$. We also restrict the momentum in the lower profile
by replacing $\Lb$ with a $C^3$ function $\Phi$; the upper
comparison is localized to a region of bounded gradient.
Small shifts enforce the initial inequalities. The corrected
profiles then become a subsolution and a supersolution after
integrating their equation errors. The leading integrals are
\[
 \varepsilon\int_0^t\frac{\dd s}{t+\delta-s}
 =\varepsilon\int_0^t\frac{\dd s}{s+\delta}
 =\varepsilon\log(1+t/\delta).
\]
Comparison, followed by $\delta=\varepsilon$ and
\eqref{eq:reduction-error}, gives the rate. Section~\ref{sec:prelim}
proves the reduction and regularity theorems. Section~\ref{sec:proof}
prepares the comparison estimates and proves the global rate.

\subsection{History of the problem}\label{subsec:history}
For first-order periodic homogenization, the qualitative theory goes back
to Lions, Papanicolaou, and Varadhan \cite{LPV} and the perturbed test
function method of Evans \cite{Evans89,Evans92}.
Capuzzo-Dolcetta and Ishii \cite{CDI} obtained the classical
$O(\varepsilon^{1/3})$ convergence rate without a convexity assumption.
In the convex setting, Mitake, Tran, and Yu \cite{MTY} proved an
$O(\varepsilon)$ lower error bound in every dimension and sharper
two-sided estimates in several special cases.
Tran and Yu \cite{TY} proved the optimal $O(\varepsilon)$ rate,
uniformly in space and time, by adapting Burago's curve-cutting argument
\cite{Burago} for periodic metrics to the time-dependent action.
Their method has proved flexible for first-order periodic homogenization
of convex Hamiltonians. Han, Jing, Mitake, and Tran \cite{HJMT}
extended it to state-constraint problems on periodically perforated
domains, obtaining the optimal $O(\varepsilon)$ rate.
Han and Jang \cite{HJ} treated multiscale problems with slow spatial
dependence, obtaining an $O(\sqrt\varepsilon)$ rate on fixed time intervals.
Recently, Guo, Jing, Tran, and Zhang \cite{GJTZ} obtained probabilistic
convergence estimates with exponent $1/2$, up to slowly varying factors,
for large-time averages and homogenization in dynamic random environments
with finite-range dependence in time.

The study of vanishing viscosity without rapid spatial oscillations
goes back to Fleming \cite{Fleming64}. The viscosity-solution theory
of Crandall and Lions \cite{CL83} and their approximation estimates
\cite{CL84} provide a comparison-based PDE framework, including the
classical $O(\sqrt\varepsilon)$ estimate for Lipschitz data. The nonlinear adjoint method gives another approach;
see Evans \cite{Evans10} and Tran \cite{Tran11}.
More recently, Chaintron and Daudin established the sharp
$O(\varepsilon|\log\varepsilon|)$ rate for purely quadratic Hamiltonians
with Lipschitz data and no forcing \cite{CDq}, and for uniformly convex
Hamiltonians with semiconcave data
under their regularity assumptions \cite{CD}.
Cirant and Goffi \cite{CG} obtained $O(\varepsilon|\log\varepsilon|)$
estimates away from the terminal time for uniformly convex equations on
the torus, as well as $O(\varepsilon^\beta)$ bounds for suitable
$\beta>1/2$ in a class of degenerately convex Hamiltonians.
Except for explicitly global
statements, the rates recalled here concern fixed finite time intervals.

When periodic homogenization and vanishing viscosity occur together,
Evans \cite{Evans92} established qualitative homogenization for viscous
Hamilton--Jacobi equations. Camilli, Cesaroni, and Marchi \cite{CCM}
studied quantitative estimates for the joint limit in fully nonlinear
elliptic equations. For the evolutionary problem at the scale considered
here, Qian, Sprekeler, Tran, and Yu \cite{QSTY} proved the
$O(\sqrt\varepsilon)$ rate by PDE methods without assuming convexity,
and showed its optimality in that general class.
For the quadratic Hamiltonian $H(y,p)=|p|^2/2+V(y)$,
Liu, Tran, and Yu \cite{LTY} obtained the sharp global bound
$C\varepsilon[1+\log(1+t/\varepsilon)]$ for bounded Lipschitz data,
using the Hopf--Cole transformation and estimates for the resulting
parabolic kernel. The present paper obtains this global logarithmic
bound under \HYP--\GYP\ by corrector expansions and viscosity
comparison, using the regularity furnished by the elliptic cell problem.

\subsection{What is sufficient and what remains}\label{subsec:scope}
The hypothesis on $H^*$ is the pointwise positive definiteness of
$H^*_{pp}$ specified after \eqref{hyp:H}. On each fixed momentum ball,
compactness gives a positive lower Hessian bound. The reduction in
Theorem~\ref{thm:reduction} then gives a uniformly convex modification.
This is enough to obtain positive definiteness of $D^2\Hb$ and the
local derivative bounds for $\Lb$ used in the comparison argument.

Strict convexity alone is weaker. For instance, $H^*(p)=|p|^4$ is
smooth and strictly convex, but $H^*_{pp}(0)=0$. In this nonoscillatory
case $\Hb^*=H^*$ and its convex dual is
\[
 \Lb^*(q)=\frac{3}{4^{4/3}}|q|^{4/3},
\]
which is not twice differentiable at the origin. Thus strict convexity
does not by itself provide the dual regularity needed here.
For merely convex Hamiltonians with an affine segment,
\cite[Theorem~1.2]{QSTY} gives one-dimensional examples with bounded
Lipschitz data for which $|u^\varepsilon(0,1)-u(0,1)|\ge c\sqrt\varepsilon$,
so the general $O(\sqrt\varepsilon)$ rate is sharp even in this class.
Even in the uniformly convex class, examples with bounded Lipschitz
data attaining order $\varepsilon|\log\varepsilon|$ exclude a general
$O(\varepsilon)$ bound \cite[Remark~3.7]{LTY}.

The $C^3$ assumption has a different role. In
Theorem~\ref{thm:cell}, it gives $C^3$ parameter dependence in the
cell problem and hence $\Hb,\Lb\in C^3$. Their third derivatives
are bounded on the compact sets used for the comparison profiles.
In Lemma~\ref{lem:calculation}, diffusion of the corrector produces
$-\varepsilon^2\chi_p\cdot D\Delta v/2$, which requires third
spatial derivatives of the smooth profile $v$.
With only $C^2$ regularity, mollified third derivatives need not remain
bounded, so the same proof does not settle the logarithmic rate.
This limitation concerns general Hamiltonians: the quadratic-potential
result of \cite{LTY} already allows a merely Lipschitz potential.

\section{Preliminary results}\label{sec:prelim}
Integration over $\T^n$ means integration over $[0,1]^n$.
For each $p\in\R^n$, the normalized cell problem for $H$ is
\begin{equation}\label{eq:cell}
 -\tfrac12\Delta_y\chi(y,p)+H(y,p+D_y\chi(y,p))=\Hb(p),
 \qquad \int_{\T^n}\chi(y,p)\dd y=0.
\end{equation}
For $H^*$, write $(\chi^*,\Hb^*)$ for the corresponding pair. The periodic cell theorem gives a unique normalized classical pair
for $H^*$ under \HYP\ and for $H$ under \eqref{eq:uniform-structure}, see \cite[Definition~1]{QSTY}. 

\subsection{Proof of the reduction theorem}\label{subsec:reduction}
For every $r>0$, choose a smooth momentum cutoff $\zeta$ equal to one
on $\overline B_{r+1}$ and zero outside $B_{r+2}$. Choose a smooth
convex radial function $J_r$, zero on $\overline B_r$, with globally
bounded Hessian and $D^2J_r\ge\mu_r I$ on $\R^n\setminus B_{r+1}$,
where $\mu_r>0$. For example, set $J_r(p)=\theta(|p|)$ with
$\theta(0)=\theta'(0)=0$ and
$\theta''(s)=2\sigma(s-r)$, where $\sigma$ is smooth and
nondecreasing, zero for $s\le0$ and one for $s\ge1$.
The radial and tangential Hessian eigenvalues are $\theta''(|p|)$
and $\theta'(|p|)/|p|$, respectively. The positive definiteness of
$H^*_{pp}$ on $\overline B_{r+2}$, together with the lower bound for
$D^2J_r$ on the transition annulus, shows that
$H^r=\zeta H^*+A_rJ_r\in C^3(\T^n\times\R^n)$, for sufficiently
large $A_r$, satisfies
\begin{equation}\label{eq:convex-extension-agreement}
 \begin{gathered}
 H^r=H^*\quad\text{on }\T^n\times\overline B_r,\\
 0<\lambda_r I\le H^r_{pp}\le\Lambda_r I,
 \qquad |H_y^r|\le C_r.
 \end{gathered}
\end{equation}
The modification and the positive constants $\lambda_r,\Lambda_r,C_r$
depend only on $H^*,n,r$. Write $\overline{H^r}$ for its effective
Hamiltonian. See also \cite{AF} for convex quadratic modifications,
and \cite[Section~2.1]{MTY} and \cite[Section~2.1]{TY} for the use of
modifications outside a bounded momentum set in periodic homogenization.

\begin{proof}[Proof of Theorem~\ref{thm:reduction}]
We first fix one modification of $H^*$, then compare the viscous
solutions, and finally identify the effective solutions.

\textit{Step 1. Fix the constants and the modification.}
Choose a nonnegative mollifier $\rho\in C_c^\infty(B_1)$ of integral
one, depending only on $n$, and set
\[
 h_K^*=\max_{y\in\T^n,\ |p|\le K}|H^*(y,p)|,\qquad
 c_\rho=\int_{\R^n}|z|\,|\Delta\rho(z)|\dd z,\qquad
 M=h_K^*+\tfrac12c_\rho K.
\]
For $f_\varepsilon=\rho_\varepsilon*g$, where
$\rho_\varepsilon(z)=\varepsilon^{-n}\rho(z/\varepsilon)$,
convolution gives
\begin{equation}\label{eq:localized-data}
 \|f_\varepsilon-g\|_\infty\le K\varepsilon,\qquad
 \|Df_\varepsilon\|_\infty\le K,\qquad
 \varepsilon\|\Delta f_\varepsilon\|_\infty\le c_\rho K.
\end{equation}
The last estimate follows by writing
$\Delta f_\varepsilon(x)=\int\Delta\rho_\varepsilon(z)
[g(x-z)-g(x)]\dd z$.

Write
\[
 \mathcal B(y,q)=H^*(y,q)^2+nH_y^*(y,q)\cdot q.
\]
By \HYP, choose $R=R(H^*,n,K)>K+1$ such that
\begin{equation}\label{eq:bernstein-threshold}
 \mathcal B(y,q)>2M^2+1\qquad(y\in\T^n,\ |q|\ge R).
\end{equation}
Using \eqref{eq:convex-extension-agreement}, fix $H=H^{2R}$.
In particular, $H=H^*$ on $\T^n\times\overline B_{2R}$ and
$H$ has all the properties in \eqref{eq:reduction-H}.
The quantities $M,R,H$ have now been fixed using only $H^*,n,K$;
none depends on $\varepsilon$ or a terminal time.

\textit{Step 2. Compare the viscous solutions.}
Let $\omega^\varepsilon$ be the smooth-data solution of
\[
 \omega_t^\varepsilon+H(x/\varepsilon,D\omega^\varepsilon)
 =\tfrac\varepsilon2\Delta\omega^\varepsilon,
 \qquad \omega^\varepsilon(\cdot,0)=f_\varepsilon.
\]
We claim that $\omega^\varepsilon$ also solves the equation with
$H^*$. It suffices to show that its gradient stays in $\overline B_R$.
First, \eqref{eq:localized-data} and $H=H^*$ on $\overline B_K$
make $f_\varepsilon-Mt$ and $f_\varepsilon+Mt$ a subsolution and
a supersolution, respectively. Comparison gives
$|\omega^\varepsilon(x,h)-f_\varepsilon(x)|\le Mh$ for $h\ge0$.
Applying comparison once more to time translates yields
\begin{equation}\label{eq:smooth-time-bound}
 |\omega^\varepsilon(x,t+h)-\omega^\varepsilon(x,t)|\le Mh,
 \qquad |\omega_t^\varepsilon|\le M\quad(t>0).
\end{equation}

Now set
\[
 v(y,\tau)=\varepsilon^{-1}\omega^\varepsilon(\varepsilon y,
              \varepsilon\tau),\qquad Z=\tfrac12|D_yv|^2.
\]
Then $D_yv=D_x\omega^\varepsilon$, $v_\tau=\omega_t^\varepsilon$,
and $v_\tau-\Delta_yv/2+H(y,D_yv)=0$.
Before the gradient reaches $2R$, this equation is also valid with
$H^*$ in place of $H$. Differentiation gives
\[
 Z_\tau-\tfrac12\Delta_yZ+H_p^*(y,D_yv)\cdot D_yZ
 =-\tfrac12|D_y^2v|_F^2-H_y^*(y,D_yv)\cdot D_yv.
\]
Using $\Delta_yv=2(v_\tau+H^*(y,D_yv))$ and
$2(a+b)^2\ge a^2-2b^2$, we have
\[
 \tfrac12|D_y^2v|_F^2
 \ge\frac{2}{n}(v_\tau+H^*(y,D_yv))^2
 \ge\frac{H^*(y,D_yv)^2-2M^2}{n}.
\]
Consequently,
\begin{equation}\label{eq:scaled-bernstein}
 Z_\tau-\tfrac12\Delta_yZ+H_p^*(y,D_yv)\cdot D_yZ
 \le\frac{2M^2-\mathcal B(y,D_yv)}{n}
 \le-\frac1n\quad\text{where }|D_yv|\ge R.
\end{equation}
This prevents $|D_yv|$ from exceeding $R$, since its initial value
is at most $K<R$. More precisely, on any finite time strip before
the first possible exit from $B_{2R}$, suppose that
$\sup Z>R^2/2$. For sufficiently small $\eta>0$, the function
$Z-\eta\sqrt{1+|y|^2}$ attains a maximum at a positive time where
$|D_yv|>R$. At this point, the left-hand side of
\eqref{eq:scaled-bernstein} is at least $-C_R\eta$, because
$H_p^*$ is bounded on $\T^n\times\overline B_{2R}$.
This contradicts \eqref{eq:scaled-bernstein} as $\eta\downarrow0$.
The bound $|D_yv|\le R<2R$ rules out the first exit and continues
for all times. Thus
\begin{equation}\label{eq:smooth-gradient-bound}
 \|D\omega^\varepsilon\|_{L^\infty(\R^n\times[0,\infty))}
 \le R\qquad(0<\varepsilon\le1).
\end{equation}
Both \eqref{eq:smooth-time-bound} and
\eqref{eq:smooth-gradient-bound} use the same $M,R$ for every
$\varepsilon$ and every terminal time.

Since $H=H^*$ on $\overline B_{2R}$,
$\omega^\varepsilon$ is a common solution of the two viscous
equations with initial datum $f_\varepsilon$.
Comparison and \eqref{eq:localized-data} therefore give
\[
 \|U^\varepsilon-\omega^\varepsilon\|_\infty\le K\varepsilon,
 \qquad
 \|u^\varepsilon-\omega^\varepsilon\|_\infty\le K\varepsilon,
\]
where both norms are over $\R^n\times[0,\infty)$.
The triangle inequality proves
\begin{equation}\label{eq:same-data-reduction}
 \|U^\varepsilon-u^\varepsilon\|_\infty\le2K\varepsilon.
\end{equation}

\textit{Step 3. Identify the effective solutions.}
Fix $|p|\le K$. At the maximum and minimum of
$\chi^*(\cdot,p)$, the cell equation gives
$|\Hb^*(p)|\le h_K^*\le M$. Set
$q=p+D_y\chi^*(y,p)$ and $Z=|q|^2/2$.
The stationary Bernstein calculation is
\[
 -\tfrac12\Delta_yZ+H_p^*(y,q)\cdot D_yZ
 =-\tfrac12|D_yq|_F^2-H_y^*(y,q)\cdot q.
\]
At a maximum of $Z$, use
$\operatorname{tr}D_yq=2(H^*(y,q)-\Hb^*(p))$ to obtain
\[
 0\ge\frac{2}{n}(H^*(y,q)-\Hb^*(p))^2
          +H_y^*(y,q)\cdot q
 \ge\frac{\mathcal B(y,q)-2|\Hb^*(p)|^2}{n}.
\]
Hence $\mathcal B(y,q)\le2M^2$ there, and
\eqref{eq:bernstein-threshold} gives
\[
 |p+D_y\chi^*(y,p)|<R
 \qquad(y\in\T^n,\ |p|\le K).
\]
The same corrector therefore solves the cell problem for $H$.
Uniqueness of the cell constant gives
\begin{equation}\label{eq:preserved-cell-pair}
 \Hb(p)=\Hb^*(p)\qquad(|p|\le K).
\end{equation}
Finally, spatial-translation comparison for the two autonomous
effective equations preserves the initial Lipschitz bound $K$.
All spatial test gradients of $U$ and $u$ therefore lie in
$\overline B_K$. By \eqref{eq:preserved-cell-pair}, they solve
the same effective Cauchy problem, and comparison yields $U=u$.
\end{proof}

\subsection{Proof of the regularity theorem on \texorpdfstring{$\Hb$ and $\Lb$}{Hbar and Lbar}}\label{subsec:cell}
\begin{proof}[Proof of Theorem~\ref{thm:cell}]
\textit{1. The cell pair and its dependence on $p$.}
Fix $\ell>n$. Since $H\in C^3$ and the cell corrector is classical,
bootstrapping \eqref{eq:cell} with the elliptic $L^\ell$ estimates gives
$\chi(\cdot,p)\in W^{5,\ell}(\T^n)\subset C^4(\T^n)$ for each fixed $p$.

Introduce the Banach spaces
\[
 X=\left\{f\in W^{2,\ell}(\T^n):\int_{\T^n}f\dd y=0\right\},
 \qquad Y=L^\ell(\T^n).
\]
The Sobolev space $W^{2,\ell}$ consists of periodic functions with
weak derivatives through order two in $L^\ell$; $X$ carries its
inherited norm. Consider
\[
 \mathcal F:X\times\R\times\R^n\longrightarrow Y,
 \qquad \mathcal F(f,d,p)=-\tfrac12\Delta_yf+H(y,p+D_yf)-d.
\]
The Sobolev embedding $W^{1,\ell}(\T^n)\subset C^0(\T^n)$ gives
$W^{2,\ell}(\T^n)\subset C^1(\T^n)$.
On a bounded neighborhood in $X\times\R^n$, all values of $p+D_yf$
therefore lie in a fixed compact ball in $\R^n$.
Taylor's formula shows that $N(f,p)=H(\cdot,p+D_yf)$ is $C^3$,
with derivatives, for $h_i\in X$ and $e_i\in\R^n$, given by
\[
 D^jN(f,p)[(h_1,e_1),\ldots,(h_j,e_j)]
 =D_p^jH(\cdot,p+D_yf)
 [e_1+D_yh_1,\ldots,e_j+D_yh_j],\qquad 1\le j\le3.
\]
Thus $\mathcal F$ is $C^3$.

At a cell pair, define
\[
 b_p(y)=H_p(y,p+D_y\chi(y,p)),\qquad
 \mathcal A_ph=-\tfrac12\Delta_yh+b_p\cdot D_yh.
\]
Then $D_{(f,d)}\mathcal F(h,a)=\mathcal A_ph-a$.
The map $(h,a)\mapsto-\Delta_yh/2-a$ is an isomorphism from
$X\times\R$ to $Y$, by the mean-zero periodic Poisson problem.
The drift term is compact into $Y$, because the embedding of
$W^{2,\ell}(\T^n)$ into $W^{1,\ell}(\T^n)$ is compact.
Hence $D_{(f,d)}\mathcal F$ is Fredholm of index zero.
If $\mathcal A_ph=a$, elliptic regularity makes $h$ classical;
its maximum and minimum give $a=0$, and then the strong maximum
principle and zero mean give $h=0$. Thus $D_{(f,d)}\mathcal F$ is
injective. Since its Fredholm index is zero, it is also surjective;
the bounded inverse theorem makes it an isomorphism. See
\cite[Sections~5.4--5.5]{Zeidler}.

The Banach-space $C^3$ implicit function theorem
\cite[Theorem~4.E, Section~4.8, pp.~250--251]{Zeidler} now gives
\begin{equation}\label{eq:cell-parameter-regularity}
 p\longmapsto(\chi(\cdot,p),\Hb(p))
 \quad\text{of class }C^3\text{ into }X\times\R.
\end{equation}
Uniqueness joins the local families. In particular, $\Hb\in C^3$.
We next verify $\chi\in C^2(\T^n\times\R^n)$, including the mixed
derivatives. By \eqref{eq:cell-parameter-regularity} and
$W^{2,\ell}\subset C^1$, the functions $\chi$, $D_y\chi$,
$D_p\chi$, $D_yD_p\chi$, and $D_p^2\chi$ are jointly continuous
in $(y,p)$. To treat $D_y^2\chi$, put
\[
 G_p=H(\cdot,p+D_y\chi(\cdot,p))-\Hb(p).
\]
The chain rule and continuity of $p\mapsto\chi(\cdot,p)$ in
$W^{2,\ell}$ show that $p\mapsto G_p$ is continuous in $W^{1,\ell}$.
Since $\Delta_y\chi(\cdot,p)=2G_p$ and $\chi$ has zero mean,
the periodic Poisson estimate gives, as $q\to p$,
\[
 \|\chi(\cdot,q)-\chi(\cdot,p)\|_{W^{3,\ell}}
 \le C\|G_q-G_p\|_{W^{1,\ell}}\longrightarrow0.
\]
The embedding $W^{3,\ell}\subset C^2$ now gives joint continuity
of $D_y^2\chi$. All derivatives of total order at most two are
therefore continuous on $\T^n\times\R^n$.

\textit{2. Positive Hessians and duality.}
Fix $p\in\R^n$ and $\xi\in\R^n\setminus\{0\}$, and put
\[
 P(y)=p+D_y\chi(y,p),\qquad
 a(y)=D_p\chi(y,p)\cdot\xi,\qquad
 b(y)=D^2_{pp}\chi(y,p)[\xi,\xi],\qquad
 Q(y)=\xi+D_ya(y).
\]
Twice differentiating \eqref{eq:cell} in the fixed direction
$\xi$, using \eqref{eq:cell-parameter-regularity}, gives
\begin{equation}\label{eq:effective-hessian-equation}
 \mathcal A_pb+Q^TH_{pp}(y,P)Q
   =\xi^TD^2\Hb(p)\xi.
\end{equation}
Here $\mathcal A_p$ is the operator defined in part~1, with
drift $H_p(y,P)$.
$b\in W^{2,\ell}$, and the coefficients and right-hand side
are locally H\"older continuous. Elliptic regularity makes $b$
classical, so the maximum principle applies.
Let $c=\xi^TD^2\Hb(p)\xi$. At a maximum of the periodic function
$b$, equation \eqref{eq:effective-hessian-equation} gives
$c\ge Q^TH_{pp}(y,P)Q\ge0$. If $c=0$, then
$\mathcal A_pb=-Q^TH_{pp}(y,P)Q\le0$ on the torus.
The strong maximum principle forces $b$ to be constant, so
\eqref{eq:effective-hessian-equation} and $H_{pp}>0$ imply
$Q\equiv0$. But periodicity gives
\[
 \int_{\T^n}Q(y)\dd y
 =\xi+\int_{\T^n}D_ya(y)\dd y=\xi\ne0,
\]
a contradiction. Thus $\xi^TD^2\Hb(p)\xi>0$ for every
$\xi\ne0$, which proves $D^2\Hb(p)>0$ directly.

At a maximum and a minimum of $\chi(\cdot,p)$,
$\min_yH(y,p)\le\Hb(p)\le\max_yH(y,p)$.
The quadratic lower and upper bounds for $H$ therefore pass to
$\Hb$, and convex duality gives quadratic growth for $\Lb$ as well.
In particular, $\Hb(p)-p\cdot q$ has a unique minimizer for every
$q$. Thus $D\Hb$ is bijective. Its derivative is everywhere
invertible, so the finite-dimensional inverse function theorem gives
$(D\Hb)^{-1}\in C^2$. Differentiating the dual formula yields
\begin{equation}\label{eq:dual-gradients}
 D\Lb=(D\Hb)^{-1},\qquad
 D^2\Lb(q)=\bigl[D^2\Hb(D\Lb(q))\bigr]^{-1}.
\end{equation}
It follows that $\Lb\in C^3$ and $D^2\Lb>0$. We will also use
\begin{equation}\label{eq:effective-identity}
 \Hb(D\Lb(q))=q\cdot D\Lb(q)-\Lb(q).
\end{equation}
\end{proof}

\section{Proof of the global estimate}\label{sec:proof}
\subsection{Preparations for the comparison calculation}\label{subsec:calculation}
We first record a heat-flow estimate, choose a ball containing the
minimizing velocities in the Hopf--Lax formula, and construct a
modification $\Phi$ of $\Lb$ with bounded gradient.

All radii and constants chosen in this subsection depend only on
$H,n,K$. Positive constants $c,C$ may change between
inequalities, with their dependence specified when used.
Vector and derivative-array norms are Euclidean. For matrices,
$|A|_F^2=\sum_{i,j}A_{ij}^2$, $A:B=\sum_{i,j}A_{ij}B_{ij}$,
$|A|_{\rm op}=\sup_{|e|=1}|Ae|$, and $\operatorname{tr}A=\sum_iA_{ii}$.
For a three-index array, $|T|_{HS}^2=\sum_{i,j,k}T_{ijk}^2$.
All indices range from $1$ to $n$, and $A^T$ denotes transpose.
A $C^j$ norm is the sum of the suprema of all derivatives through
order $j$ on the indicated compact set.

\begin{lemma}[Heat-flow estimate and minimizing velocities]\label{lem:basic-comparison}
Set $h_K=\max_{y\in\T^n,\ |p|\le K}|H(y,p)|$.
For all $x\in\R^n$ and $t\ge0$,
\begin{equation}\label{eq:heat-control}
 |u^\varepsilon(x,t)-g(x)|\le h_Kt+K\sqrt{n\varepsilon t}.
\end{equation}
For $t>0$, the effective solution is given by
\begin{equation}\label{eq:HL}
 u(x,t)=\min_{z\in\R^n}
       \left\{g(z)+t\Lb\!\left(\frac{x-z}{t}\right)\right\},\qquad t>0.
\end{equation}
The minimum is attained, and every minimizer $z_*$ satisfies
\begin{equation}\label{eq:minimizer-energy}
 \Lb\!\left(\frac{x-z_*}{t}\right)
 \le K\frac{|x-z_*|}{t}+\Lb(0).
\end{equation}
\end{lemma}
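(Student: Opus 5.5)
The lemma has three parts, and I will treat them separately: the heat-flow bound \eqref{eq:heat-control} by comparison with explicit barriers built from a heat-regularized datum; the Hopf--Lax formula \eqref{eq:HL} by the standard representation for convex, superlinear, $x$-independent Hamiltonians; and \eqref{eq:minimizer-energy} by comparing the minimizer with the competitor $z=x$.

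\emph{Heat-flow bound.} Let $w(x,t)=\int_{\R^n}g(x-\sqrt{\varepsilon}\,z)\,\gamma_t(z)\dd z$, where $\gamma_t$ is the Gaussian kernel of variance $t$ in each coordinate. Then $w$ solves $w_t=\frac\varepsilon2\Delta w$ with $w(\cdot,0)=g$, and $|Dw|\le K$. By Jensen,
\[
|w(x,t)-g(x)|\le K\sqrt{\varepsilon}\int|z|\gamma_t(z)\dd z\le K\sqrt{n\varepsilon t}.
\]
Since $|Dw|\le K$, we have $|H(x/\varepsilon,Dw)|\le h_K$. Hence $w-h_Kt$ is a (smooth) subsolution and $w+h_Kt$ a supersolution of \eqref{eq:reduced-equations}, both bounded on finite strips and equal to $g$ at $t=0$. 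Comparison for \eqref{eq:reduced-equations}, whose uniqueness class is given by Theorem~\ref{thm:reduction}, yields $w-h_Kt\le u^\varepsilon\le w+h_Kt$. Combining with the bound on $w-g$ gives \eqref{eq:heat-control}.

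\emph{Hopf--Lax formula.} By Theorem~\ref{thm:cell}, $\Hb$ is $C^3$, strictly convex, and has quadratic upper and lower bounds (noted in that proof), so $\Lb$ is finite, convex and superlinear. For bounded Lipschitz $g$, the right-hand side of \eqref{eq:HL} is well defined. The minimum is attained because $g$ is bounded and $t\Lb((x-z)/t)\to\infty$ as $|z|\to\infty$, by quadratic growth of $\Lb$. The classical result (Evans' PDE book, or Bardi--Capuzzo-Dolcetta) says this function is a bounded, Lipschitz viscosity solution of \eqref{eq:modified-effective} taking the initial datum $g$. Uniqueness of bounded viscosity solutions, from Theorem~\ref{thm:reduction}, then identifies it with $u$. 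I expect this step to need the most care, since one must check that the hypotheses of the cited theorem (convexity and superlinearity of $\Hb$, Lipschitz bounded data) are met and that uniqueness holds in the relevant class. Both follow from Theorems~\ref{thm:reduction} and~\ref{thm:cell}, so this is bookkeeping.

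\emph{Minimizer bound.} Compare $z_*$ with the competitor $z=x$:
\[
g(z_*)+t\Lb\!\left(\tfrac{x-z_*}{t}\right)\le g(x)+t\Lb(0).
\]
Rearranging and using $g(x)-g(z_*)\le K|x-z_*|$ from \eqref{hyp:G}, then dividing by $t$, gives \eqref{eq:minimizer-energy}.
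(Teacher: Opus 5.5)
Your proposal is correct and follows essentially the same route as the paper: comparison of $u^\varepsilon$ with the heat flow of $g$ shifted by $\pm h_Kt$, the second-moment bound for the kernel, the classical Hopf--Lax representation for the convex superlinear $\Hb$ with attainment from the quadratic growth of $\Lb$, and the competitor $z=x$ for the minimizer bound. The only differences are cosmetic: you write the kernel explicitly, and you spell out the Cauchy--Schwarz step and the boundedness and uniqueness check that the paper leaves implicit.
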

\begin{proof}
Let $G^\varepsilon$ be the heat flow with initial value $g$ and
diffusion coefficient $\varepsilon/2$. Convolution with the heat
kernel preserves the spatial Lipschitz bound $K$. Consequently,
$G^\varepsilon-h_Kt$ and $G^\varepsilon+h_Kt$ are, respectively,
a subsolution and a supersolution of the microscopic equation.
The heat kernel has second moment $n\varepsilon t$, so
\[
 |G^\varepsilon(x,t)-g(x)|\le K\sqrt{n\varepsilon t}.
\]
Comparison gives \eqref{eq:heat-control}.
The Hopf--Lax formula applies because $\Hb$ is convex and
superlinear; see \cite[Chapter~2]{TranBook}.
The expression minimized in \eqref{eq:HL} tends to $+\infty$ as
$|z|\to\infty$, because $g$ is bounded and $\Lb$ has quadratic growth.
Comparing a minimizer with the competitor $z=x$ gives
\[
 t\Lb\!\left(\frac{x-z_*}{t}\right)
 \le g(x)-g(z_*)+t\Lb(0)
 \le K|x-z_*|+t\Lb(0),
\]
which proves \eqref{eq:minimizer-energy}.
\end{proof}

We next choose the velocity ball used by both comparison profiles.
The exterior supersolution in Section~\ref{subsec:upper} will have
spatial slope $K+1$ and time slope
\begin{equation}\label{eq:exterior-slope}
 \gamma=1+\max_{y\in\T^n,\ |p|\le K+1}|H(y,p)|+\tfrac n2(K+1).
\end{equation}
Choose $R_1>0$ large enough that
\begin{equation}\label{eq:fixed-velocity-radius}
 \begin{gathered}
 R_1>2\left(|\Lb(0)|+\max_{|p|\le K+1}|\Hb(p)|\right)+1,
 \\
 \Lb(q)>(K+1)(|q|+1)+\gamma+1\quad(|q|\ge R_1).
 \end{gathered}
\end{equation}
Such a choice is possible by quadratic growth; fix, for example,
the least positive integer satisfying both inequalities.
The second inequality, together with
\eqref{eq:minimizer-energy}, implies that every Hopf--Lax minimizer
satisfies
\begin{equation}\label{eq:minimizer-displacement}
 |x-z_*|<R_1t.
\end{equation}
Indeed, for $|q|\ge R_1$ the lower bound in
\eqref{eq:fixed-velocity-radius} is strictly larger than
$K|q|+\Lb(0)$, by the first inequality there.
The first inequality also localizes a possible initial defect in
the lower comparison. The extra margin in the second gives the
strict ordering of the two upper-comparison branches at their interface.

For the lower profile, the velocity variable ranges over all of
$\R^n$. We therefore replace $\Lb$ by a function $\Phi$ that agrees
with it on $\overline B_{R_1}$ and has bounded gradient everywhere;
this keeps the momentum argument of $\chi$ in a fixed compact set.
We obtain $\Phi$ by adding a nonnegative convex barrier to $\Hb$
on a momentum ball $B_{R_2}$ and then taking its dual. This also
preserves the effective subsolution inequality, as proved next.

\begin{lemma}\label{lem:cap}
There exist $R_2=R_2(H,n,K)>K+1$ and $\Phi\in C^3(\R^n)$
such that
\begin{gather}
 \Phi\le\Lb,\qquad \Phi=\Lb\ \hbox{on }\overline B_{R_1},\qquad |D\Phi|<R_2,
                                                        \label{eq:cap-agreement}\\
 \Phi(q)\ge (K+1)|q|-\sup_{|p|\le K+1}|\Hb(p)|,                 \label{eq:cap-growth}\\
 \Phi(q)-q\cdot D\Phi(q)+\Hb(D\Phi(q))\le0,\label{eq:cap-effective}\\
 \sup_{q\in\R^n}\left(|D^2\Phi(q)|_F+|D^3\Phi(q)|_{HS}
                   +|D^2\Phi(q)q|\right)\le C(H,n,K).    \label{eq:cap-derivatives}
\end{gather}
\end{lemma}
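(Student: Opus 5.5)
The plan is to build $\Phi$ as a Legendre dual. Add to $\Hb$ a convex barrier that vanishes on a fixed momentum ball and blows up at the boundary of $B_{R_2}$, then dualize. Formula \eqref{eq:dual-gradients} and Theorem~\ref{thm:cell} then transfer regularity.

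\textbf{Choice of radii.} Set
\[
 r_0=\max\Bigl\{K+1,\ \max_{|q|\le R_1}|D\Lb(q)|\Bigr\},
\]
which is finite by Theorem~\ref{thm:cell}, and put $R_2=r_0+2$. Fix a smooth nondecreasing $\beta:[0,R_2)\to[0,\infty)$ with $\beta=0$ on $[0,r_0]$, $\beta$ convex, and near $s=R_2$
\[
 \beta(s)=\frac{A}{R_2-s}\quad\text{(up to a smooth transition)}.
\]

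\textbf{The barrier and its dual.} Define $\Psi(p)=\Hb(p)+\beta(|p|)$ for $|p|<R_2$ and $\Psi=+\infty$ otherwise, and set $\Phi=\Psi^*$. The following properties hold.
\begin{itemize}
\item $\Psi\in C^3(B_{R_2})$ and $D^2\Psi\ge D^2\Hb>0$. On $B_{R_2}$, $D^2\Hb$ has a positive lower bound $c(H,n,K)$ by compactness, and the barrier Hessian is positive semidefinite.
\item $\Psi(p)\to\infty$ as $|p|\uparrow R_2$, and indeed $|D\Psi|\to\infty$ there. Hence, for every $q$, the function $p\mapsto p\cdot q-\Psi(p)$ attains its supremum at a unique interior point $p=(D\Psi)^{-1}(q)$.
\end{itemize}
So $D\Psi:B_{R_2}\to\R^n$ is a $C^2$ diffeomorphism. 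As in the proof of Theorem~\ref{thm:cell}, this gives $\Phi\in C^3$ with
\[
 D\Phi=(D\Psi)^{-1},\qquad D^2\Phi=\bigl[D^2\Psi(D\Phi)\bigr]^{-1}.
\]

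\textbf{The first three conclusions.} These follow from the construction.
\begin{itemize}
\item $|D\Phi|<R_2$, because $D\Phi$ takes values in the open ball $B_{R_2}$.
\item $\Phi\le\Lb$, because $\Psi\ge\Hb$.
\item For $|q|\le R_1$, the maximizer for $\Lb$ is $D\Lb(q)\in \overline B_{r_0}$, where $\Psi=\Hb$. Hence $\Phi(q)\ge q\cdot D\Lb(q)-\Hb(D\Lb(q))=\Lb(q)$, which gives equality on $\overline B_{R_1}$.
\item For \eqref{eq:cap-growth}, test the supremum with $p=(K+1)q/|q|$, where $\Psi=\Hb$.
\item For \eqref{eq:cap-effective}, use the identity $\Phi(q)=q\cdot p-\Psi(p)$ with $p=D\Phi(q)$. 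Then
\[
 \Phi(q)-q\cdot D\Phi(q)+\Hb(D\Phi(q))=-\beta(|p|)\le0.
\]
\end{itemize}

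\textbf{The derivative bounds (main obstacle).} The hard part is \eqref{eq:cap-derivatives}, uniformly in $q$, which means near $|p|=R_2$. Write $d=R_2-|p|$. On $\{d\ge1\}$, which is the preimage of a bounded set of $q$, all the quantities are continuous on a compact set, so they are bounded. Near the boundary, the barrier dominates:
\[
 |D\Psi|\sim d^{-2},\qquad D^2\Psi\gtrsim d^{-3}\ \text{in the radial direction}.
\]
The tangential eigenvalues are only of order $\beta'/|p|\sim d^{-2}$, plus $D^2\Hb\ge c$. I would therefore bound the three quantities as follows, with $p=D\Phi(q)$.
\begin{itemize}
\item $|D^2\Phi|\le c^{-1}$ everywhere.
\item $q=D\Psi(p)$ is radial up to the bounded term $D\Hb$. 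So $D^2\Phi\,q$ is controlled by the inverse radial eigenvalue times $d^{-2}$, plus the inverse tangential eigenvalue times $O(1)$. Both are bounded; the first is $O(d)$.
\item Differentiating $D^2\Phi=[D^2\Psi]^{-1}$ gives
\[
 D^3\Phi[e_1,e_2,e_3]=-(D^2\Psi)^{-1}D^3\Psi\bigl[(D^2\Psi)^{-1}e_1,(D^2\Psi)^{-1}e_2,(D^2\Psi)^{-1}e_3\bigr],
\]
with an overall factor $(D^2\Psi)^{-1}$ applied to the output slot. Split each argument into radial and tangential components. Each component of $D^3\beta$ has order at most $d^{-4}$ (purely radial), and the mixed ones are $O(d^{-3})$ or better. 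Each inverse Hessian contributes $d^3$ radially and $d^2$ tangentially. Every combination then decays or stays bounded.
\end{itemize}
If the tangential bookkeeping fails for this $\beta$, I would change the exponent, taking $\beta\sim(R_2-s)^{-m}$ with $m$ large, to gain the needed powers of $d$. All constants depend only on $\Hb$, $R_1$, $K$, hence on $H,n,K$.
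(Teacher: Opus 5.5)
Your proposal is correct and follows essentially the paper's route: add to $\Hb$ a convex radial barrier with $d^{-1}$ blow-up at $\partial B_{R_2}$, dualize, and bound $D^2\Phi=A=(D^2\Psi)^{-1}$ and $D^3\Phi[h_1,h_2,h_3]=-D^3\Psi(p)[Ah_1,Ah_2,Ah_3]$ near the boundary. Note that this formula has exactly three factors of $A$, not the extra output factor you wrote. Your finer radial/tangential bookkeeping and the fallback to larger exponents are unnecessary: both eigenvalues of $D^2\beta(|p|)$ are at least $cd^{-2}$, so $|A|_{\rm op}\le Cd^2$, and the crude bounds $|D^3\Phi|\le Cd^{-4}d^{6}$ and $|D^2\Phi\,q|\le Cd^{2}\cdot Cd^{-2}$ already close, exactly as in the paper.
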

\begin{proof}
Choose
\begin{equation}\label{eq:fixed-parameter-radius}
 R_2=K+4+\max_{|q|\le R_1}|D\Lb(q)|.
\end{equation}
Then $D\Lb(\overline B_{R_1})$ and $\overline B_{K+1}$ both lie in
$B_{R_2-1}$. Define the radial barrier
\[
 \psi(r)=\frac{(r_+)^4}{1-r}\quad(r<1),\qquad
 j(p)=\psi(|p|-R_2+1)\quad(|p|<R_2),
\]
where $r_+=\max\{r,0\}$. The function $\psi$ is $C^3$,
nonnegative, nondecreasing, and convex. Indeed, on $(0,1)$ it is
the product of the nonnegative, nondecreasing convex functions
$r^4$ and $(1-r)^{-1}$, and its first three derivatives vanish
at zero. Hence $j\in C^3(B_{R_2})$ is convex, vanishes on
$\overline B_{R_2-1}$, and tends to $+\infty$ at $\partial B_{R_2}$.
There is no regularity issue at $p=0$, since $j$ vanishes nearby.
Define
\begin{equation}\label{eq:modified-transform}
 \widehat H(p)=\Hb(p)+j(p)\quad(p\in B_{R_2}),\qquad
 \Phi(q)=\max_{p\in B_{R_2}}\{p\cdot q-\widehat H(p)\}.
\end{equation}
For each $q\in\R^n$, the function
$p\mapsto p\cdot q-\widehat H(p)$ tends to $-\infty$ as
$p$ approaches $\partial B_{R_2}$. Strict convexity of $\widehat H$
therefore gives a unique interior maximizer.
Its equation is $D\widehat H(p)=q$. Since
$\widehat H\in C^3$ and $D^2\widehat H>0$, the inverse function
theorem makes this maximizer a $C^2$ function of $q$.
Differentiation of \eqref{eq:modified-transform} identifies it
with $D\Phi(q)$, so $\Phi\in C^3$ and $|D\Phi|<R_2$. In particular,
\begin{equation}\label{eq:modified-identity}
 D\widehat H(D\Phi(q))=q,\qquad
 \widehat H(D\Phi(q))=q\cdot D\Phi(q)-\Phi(q),
\end{equation}
and
\[
 D\Phi(q)=(D\widehat H)^{-1}(q),\qquad
 D^2\Phi(q)=(D^2\widehat H(D\Phi(q)))^{-1}.
\]
Since $\widehat H\ge\Hb$, one has $\Phi\le\Lb$.
For $|q|\le R_1$, the maximizer $D\Lb(q)$ lies where $j=0$, proving
equality. Choosing $p=(K+1)q/|q|$ in \eqref{eq:modified-transform} proves \eqref{eq:cap-growth};
at $q=0$ use $p=0$. The second identity in
\eqref{eq:modified-identity} also gives
$\Phi-q\cdot D\Phi+\Hb(D\Phi)=-j(D\Phi)\le0$,
which proves \eqref{eq:cap-effective}.

It remains to prove the global derivative bounds.
Write $j(p)=\varphi(\rho)$ with $\rho=|p|$, and put $d=R_2-\rho$.
For $0<d<1$, the explicit formula is
\[
 \varphi(\rho)=\frac{(1-d)^4}{d}
 =d^{-1}-4+6d-4d^2+d^3.
\]
Thus, for $0<d\le1/2$,
\[
 c d^{-2}\le\varphi'(\rho)\le C d^{-2},\qquad
 c d^{-3}\le\varphi''(\rho)\le C d^{-3},\qquad
 |\varphi'''(\rho)|\le C d^{-4}.
\]
The radial and tangential eigenvalues of $D^2j$ are
$\varphi''(\rho)$ and $\varphi'(\rho)/\rho$, respectively.
Since $\rho\ge R_2-1/2>0$ in this region, the radial derivative
formulas and the $C^3$ bound for $\Hb$ on $\overline B_{R_2}$ give
\[
 \left|(D^2\widehat H(p))^{-1}\right|_{\rm op}\le Cd^2,\qquad
 |D\widehat H(p)|\le Cd^{-2},\qquad
 |D^3\widehat H(p)|_{HS}\le Cd^{-4}.
\]
These three bounds hold on all of $B_{R_2}$ after increasing
$C=C(H,n,K)$: on $|p|\le R_2-1/2$, use compactness and
$D^2\widehat H\ge D^2\Hb\ge cI$.
At $p=D\Phi(q)$, put $A=(D^2\widehat H(p))^{-1}=D^2\Phi(q)$.
Differentiating the inverse-Hessian formula yields
\[
 D^3\Phi(q)[h_1,h_2,h_3]
 =-D^3\widehat H(p)[Ah_1,Ah_2,Ah_3].
\]
Here brackets denote evaluation in the indicated directions.
Using also $q=D\widehat H(p)$, we obtain
\[
 |D^2\Phi(q)|_F\le Cd^2,\qquad
 |D^3\Phi(q)|_{HS}\le Cd^6d^{-4}=Cd^2,\qquad
 |D^2\Phi(q)q|\le Cd^2d^{-2}=C.
\]
Since $0<d\le R_2$, these are the bounds in
\eqref{eq:cap-derivatives}.
\end{proof}

With $R_1,R_2$ now fixed, Theorem~\ref{thm:cell} and
compactness give a single constant $C=C(H,n,K)$ such that
\begin{equation}\label{eq:compact-derivative-bounds}
 \|\chi\|_{C^2(\T^n\times\overline B_{R_2})}
 +\|\Hb\|_{C^3(\overline B_{R_2})}
 +\|\Lb\|_{C^3(\overline B_{R_1})}
 +\max_{y\in\T^n,\ |p|\le R_2}|H_p(y,p+D_y\chi(y,p))|\le C.
\end{equation}
In the last term, the cell gradient $p+D_y\chi(y,p)$ ranges over
a compact set because it is continuous; no additional radius is needed.
These bounds use the regularity already proved, with $H$ fixed
independently of $\varepsilon$.

For a function $f(z,s)$ that is twice continuously differentiable
in the spatial variable $z$ and once in the time variable $s$, define
\begin{equation}\label{eq:comparison-operator}
 \Pe f=f_s+H(z/\varepsilon,Df)-\frac{\varepsilon}{2}\Delta f.
\end{equation}
Spatial derivatives without a subscript are in $z$.
For a comparison function $v\in C^3$, define
\begin{equation}\label{eq:corrector-approximation}
 w(z,s)=v(z,s)+\varepsilon\chi(z/\varepsilon,Dv(z,s)).
\end{equation}
The next two lemmas calculate the equation error of $w$ and then
estimate it. In both lemmas we write
\[
 p=Dv,\qquad S=D^2v,
\]
and evaluate $\chi$ and all its derivatives at $(z/\varepsilon,p)$.

\begin{lemma}\label{lem:calculation}
Assume \eqref{eq:uniform-structure}. Let $v\in C^3$ on an open space-time set and let
$w$ be given by \eqref{eq:corrector-approximation}. Then
\begin{align}
 \Pe w={}&v_s+\Hb(p)+\varepsilon\chi_p\cdot Dv_s
 +H(z/\varepsilon,p+\chi_y+\varepsilon S\chi_p)
             -H(z/\varepsilon,p+\chi_y)\notag\\
 &-\frac{\varepsilon}{2}\operatorname{tr}S-\varepsilon\chi_{yp}:S
 -\frac{\varepsilon^2}{2}
       \left[\chi_{pp}:(SS^T)+\chi_p\cdot D\Delta v\right].
                                                    \label{eq:residual}
\end{align}
\end{lemma}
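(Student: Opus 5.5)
This is a direct chain-rule computation followed by one use of the cell equation \eqref{eq:cell}. We differentiate $w$, substitute the resulting derivatives into $\Pe w$, and then replace $-\tfrac12\Delta_y\chi+H(y,p+\chi_y)$ by $\Hb(p)$.

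\emph{Time derivative.} Since $\chi\in C^2(\T^n\times\R^n)$ by Theorem~\ref{thm:cell} and $v\in C^3$, the function $w$ is $C^2$ in $z$ and $C^1$ in $s$. With $\chi$ and its derivatives evaluated at $(z/\varepsilon,p)$, $p=Dv$, the time derivative is
\[
 w_s=v_s+\varepsilon\chi_p\cdot Dv_s .
\]

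\emph{First spatial derivative.} Differentiating in $z_i$, the chain rule through $y=z/\varepsilon$ contributes $\varepsilon^{-1}$, while differentiating through $p$ contributes $S$. Using the symmetry of $S$,
\[
 Dw=p+\chi_y+\varepsilon S\chi_p .
\]
The argument of $H$ in $\Pe w$ is therefore $p+\chi_y+\varepsilon S\chi_p$. We write
\[
 H(z/\varepsilon,Dw)=H(z/\varepsilon,p+\chi_y)+\bigl[H(z/\varepsilon,p+\chi_y+\varepsilon S\chi_p)-H(z/\varepsilon,p+\chi_y)\bigr],
\]
so that the bracket is exactly the difference appearing in \eqref{eq:residual}.

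\emph{Laplacian.} We take one more divergence, term by term:
\begin{itemize}
 \item $\operatorname{div}p=\operatorname{tr}S$.
 \item $\operatorname{div}(\chi_y)=\varepsilon^{-1}\Delta_y\chi+\chi_{yp}:S$, where $\chi_{yp}$ is the mixed Hessian with $(\chi_{yp})_{ik}=\partial_{y_i}\partial_{p_k}\chi$ and $S_{ki}$ is paired accordingly.
 \item $\operatorname{div}(\varepsilon S\chi_p)=\varepsilon\,\chi_p\cdot D\Delta v+\chi_{py}:S+\varepsilon\,\chi_{pp}:(SS^T)$, using $\partial_i S_{ik}=\partial_k\Delta v$.
\end{itemize}
The two mixed terms are equal, since $\chi_{yp}:S$ is symmetric in the index pairing once $S$ is symmetric. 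Summing,
\[
 \Delta w=\operatorname{tr}S+\varepsilon^{-1}\Delta_y\chi+2\chi_{yp}:S+\varepsilon\bigl[\chi_{pp}:(SS^T)+\chi_p\cdot D\Delta v\bigr].
\]
Multiplying by $\varepsilon/2$ gives
\[
 \tfrac\varepsilon2\Delta w=\tfrac12\Delta_y\chi+\tfrac\varepsilon2\operatorname{tr}S+\varepsilon\chi_{yp}:S+\tfrac{\varepsilon^2}{2}\bigl[\chi_{pp}:(SS^T)+\chi_p\cdot D\Delta v\bigr].
\]

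\emph{Assembling the terms.} Insert these three pieces into $\Pe w=w_s+H(z/\varepsilon,Dw)-\tfrac\varepsilon2\Delta w$. The terms $H(z/\varepsilon,p+\chi_y)-\tfrac12\Delta_y\chi$ combine to $\Hb(p)$ by \eqref{eq:cell} at $y=z/\varepsilon$. What remains is exactly \eqref{eq:residual}.

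\emph{Main obstacle.} The only delicate point is regularity: $D\Delta v$ requires third derivatives of $v$, and $\chi_{pp}$, $\chi_{yp}$ together with $\Delta_y\chi$ must be continuous. Theorem~\ref{thm:cell} provides exactly this, because $\chi\in C^2(\T^n\times\R^n)$ gives all mixed second derivatives. A direct classical computation therefore suffices, with no need to mollify.
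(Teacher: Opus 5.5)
Your proposal is correct and follows the paper's proof: the same chain-rule formulas for $w_s$, $Dw$ and $\Delta w$, followed by the cell equation \eqref{eq:cell} at $y=z/\varepsilon$ to replace $H(z/\varepsilon,p+\chi_y)-\tfrac12\Delta_y\chi$ by $\Hb(p)$. One small precision: the two mixed terms agree because $\partial_{y_i}\partial_{p_k}\chi=\partial_{p_k}\partial_{y_i}\chi$, which holds by the joint $C^2$ regularity from Theorem~\ref{thm:cell}, together with the symmetry of $S$; symmetry of $S$ alone does not give it.
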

\begin{proof}
The chain rule gives
\begin{gather*}
 Dw=p+\chi_y+\varepsilon S\chi_p,\qquad
 w_s=v_s+\varepsilon\chi_p\cdot Dv_s,\\
 \Delta w=\operatorname{tr}S+\varepsilon^{-1}\Delta_y\chi
       +2\chi_{yp}:S+\varepsilon\chi_{pp}:(SS^T)
       +\varepsilon\chi_p\cdot D\Delta v.
\end{gather*}
The term $D\Delta v$ is where three spatial derivatives of $v$
enter the calculation. The corrector itself needs only joint $C^2$
regularity. Substitute these identities into \eqref{eq:comparison-operator}.
The cell equation gives
$H(z/\varepsilon,p+\chi_y)-\tfrac12\Delta_y\chi=\Hb(p)$, which proves
\eqref{eq:residual}.
\end{proof}

\begin{lemma}\label{lem:residual}
Use the fixed $R_2$ above. Let $\kappa>0$ and let $r>0$ be a real number.
Let $v\in C^3$ and let $w$ be given by
\eqref{eq:corrector-approximation}.
At any point where
\begin{equation}\label{eq:comparison-derivatives}
 |Dv|\le R_2,\qquad
 |D^2v|_F+|Dv_s|\le\frac{\kappa}{r},\qquad
 |D^3v|_{HS}\le\frac{\kappa}{r^2},
\end{equation}
one has
\begin{align}
 |w-v|&\le C\varepsilon,\label{eq:corrector-value}\\
 |\Pe w-(v_s+\Hb(Dv))|
 &\le C\left(\frac{\varepsilon}{r}
                     +\frac{\varepsilon^2}{r^2}\right),
                                             \label{eq:residual-bound}
\end{align}
where $C=C(H,n,K,\kappa)$ is independent of
$\varepsilon,r,z,s$ and of the choice of $v$ satisfying
\eqref{eq:comparison-derivatives}.
\end{lemma}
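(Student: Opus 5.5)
The plan is to start from the exact expansion in Lemma~\ref{lem:calculation} and bound each term separately, using only the compact bounds \eqref{eq:compact-derivative-bounds}. The condition $|Dv|\le R_2$ keeps the momentum argument $p$ in $\overline B_{R_2}$. There $\chi$, $\chi_y$, $\chi_p$, $\chi_{yp}$, $\chi_{pp}$ are all bounded by $C(H,n,K)$. Since $\chi$ is evaluated at $(z/\varepsilon,p)$ with $y$ ranging over the torus, this bound is uniform in $z$. The estimate \eqref{eq:corrector-value} is then immediate: $|w-v|=\varepsilon|\chi(z/\varepsilon,p)|\le C\varepsilon$.

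For \eqref{eq:residual-bound}, I subtract $v_s+\Hb(p)$ from \eqref{eq:residual} and handle the remaining terms in order.
\begin{itemize}
\item The term $\varepsilon\chi_p\cdot Dv_s$ is at most $C\varepsilon\kappa/r$.
\item The terms $\tfrac\varepsilon2\operatorname{tr}S$ and $\varepsilon\chi_{yp}:S$ are at most $C\varepsilon|S|_F\le C\varepsilon\kappa/r$.
\item The second-order terms $\tfrac{\varepsilon^2}{2}\chi_{pp}:(SS^T)$ and $\tfrac{\varepsilon^2}{2}\chi_p\cdot D\Delta v$ are bounded by $C\varepsilon^2(|S|_F^2+|D^3v|_{HS})\le C\varepsilon^2(\kappa^2+\kappa)/r^2$. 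Here I use $|D\Delta v|\le\sqrt n\,|D^3v|_{HS}$.
\end{itemize}

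The one term needing care is the Hamiltonian difference
\[
H(z/\varepsilon,P+\varepsilon S\chi_p)-H(z/\varepsilon,P),\qquad P=p+\chi_y.
\]
By the mean value theorem it is bounded by $\sup|H_p|\cdot\varepsilon|S\chi_p|$, with the supremum taken over the segment between $P$ and $P+\varepsilon S\chi_p$. The point $P$ lies in a fixed compact set, by the continuity remark after \eqref{eq:compact-derivative-bounds}. The perturbation $\varepsilon S\chi_p$ has size at most $C\varepsilon\kappa/r$, which is not a priori bounded because $r$ may be small relative to $\varepsilon$. So the segment may leave any fixed ball, and I cannot simply take a sup of $H_p$ over a compact set.

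I would handle this with the global structure \eqref{eq:uniform-structure}: $H_{pp}\le\Lambda I$ gives $|H_p(y,P')|\le|H_p(y,P)|+\Lambda|P'-P|$. Writing $\eta=\varepsilon S\chi_p$, this yields
\[
|H(y,P+\eta)-H(y,P)|\le|H_p(y,P)||\eta|+\tfrac\Lambda2|\eta|^2\le C\frac{\varepsilon\kappa}{r}+C\frac{\varepsilon^2\kappa^2}{r^2}.
\]
This is again of the stated form, using the bound on $|H_p(y,p+D_y\chi)|$ from \eqref{eq:compact-derivative-bounds}.

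Collecting all terms gives \eqref{eq:residual-bound} with $C$ depending only on $H,n,K,\kappa$, and not on $\varepsilon$, $r$, $(z,s)$, or the particular $v$. The only potential obstacle is the Hamiltonian difference, and the Taylor bound with the global $\Lambda$ resolves it, so I expect no genuine difficulty.
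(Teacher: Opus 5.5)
Your proposal is correct and follows the paper's proof essentially step for step. You bound each term of \eqref{eq:residual} using \eqref{eq:compact-derivative-bounds} and periodicity. You control the Hamiltonian increment by a second-order Taylor bound with the global $H_{pp}\le\Lambda I$, precisely because $\varepsilon S\chi_p$ need not remain in a compact set.
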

\begin{proof}
By \eqref{eq:compact-derivative-bounds}, the functions
\[
 \chi,\quad \chi_p,\quad \chi_{yp},\quad \chi_{pp},\quad
 H_p(z/\varepsilon,p+\chi_y)
\]
are uniformly bounded whenever $|p|\le R_2$.
Periodicity makes these bounds independent of $z/\varepsilon$.
This immediately gives \eqref{eq:corrector-value}.

Taylor's formula and $H_{pp}\le\Lambda I$ give
\begin{align*}
 |H(z/\varepsilon,p+\chi_y+\varepsilon S\chi_p)
                   -H(z/\varepsilon,p+\chi_y)|
 &\le\varepsilon|H_p(z/\varepsilon,p+\chi_y)|\,|S\chi_p|
             +\frac{\Lambda}{2}\varepsilon^2|S\chi_p|^2\\
 &\le C\bigl(\varepsilon|S|_F+\varepsilon^2|S|_F^2\bigr).
\end{align*}
The global bound for $H_{pp}$ controls the increment
$\varepsilon S\chi_p$, which need not remain in the compact set
used for the base cell gradient.
The other terms in \eqref{eq:residual} are bounded using
\[
 |\operatorname{tr}S|\le\sqrt n\,|S|_F,\qquad
 |\chi_{pp}:(SS^T)|\le|\chi_{pp}|_{\rm op}|S|_F^2,\qquad
 |D\Delta v|\le\sqrt n\,|D^3v|_{HS}.
\]
Consequently, with $C=C(H,n,K)$ at this stage,
\begin{equation}\label{eq:structural-error}
 |\Pe w-(v_s+\Hb(Dv))|
 \le C\varepsilon\bigl(|Dv_s|+|D^2v|_F\bigr)
 +C\varepsilon^2\bigl(|D^2v|_F^2+|D^3v|_{HS}\bigr).
\end{equation}
Substitution of \eqref{eq:comparison-derivatives} proves
\eqref{eq:residual-bound}, after allowing $C$ to depend on $\kappa$.
\end{proof}

We now collect the bounds needed for the lower and upper comparison
profiles.
\begin{lemma}\label{lem:profiles}
There is $C=C(H,n,K)$ with the following properties.
For $a\in\R^n$, $c\in\R$, and $\tau>0$, define
\[
 f_-(z,s)=c-(\tau-s)\Phi\!\left(\frac{a-z}{\tau-s}\right)
 \quad(s<\tau),\qquad
 f_+(z,s)=c+(s+\tau)\Lb\!\left(\frac{z-a}{s+\tau}\right)
 \quad(s>-\tau),
\]
and, for $0<\varepsilon\le1$, set
$w_\pm=f_\pm+\varepsilon\chi(z/\varepsilon,Df_\pm)$.
For the minus profile everywhere, and for the plus profile where
$|z-a|<R_1(s+\tau)$, one has
\begin{gather}
 (f_-)_s+\Hb(Df_-)\le0,\qquad
 (f_+)_s+\Hb(Df_+)=0,\label{eq:profile-effective}\\
 |w_\pm-f_\pm|\le C\varepsilon,\label{eq:profile-corrector}\\
 \Pe w_-\le C\left(\frac\varepsilon{\tau-s}
                   +\frac{\varepsilon^2}{(\tau-s)^2}\right),\qquad
 |\Pe w_+|\le C\left(\frac\varepsilon{s+\tau}
                   +\frac{\varepsilon^2}{(s+\tau)^2}\right).
 \label{eq:profile-residuals}
\end{gather}
Also, for $b\in\R^n$ and $r_1,r_2>0$ satisfying
$|b|\le R_1\min\{r_1,r_2\}$,
\begin{equation}\label{eq:time-shift-estimate}
 \left|r_1\Lb(b/r_1)-r_2\Lb(b/r_2)\right|
 \le C|r_1-r_2|.
\end{equation}
These constants are independent of $a,c,\tau,\varepsilon,r_1,r_2,b$.
\end{lemma}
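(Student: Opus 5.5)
The plan is to compute the derivatives of the two profiles explicitly and then feed them into Lemma~\ref{lem:residual} with $r=\tau-s$ or $r=s+\tau$.

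\emph{Derivatives of $f_-$.} Put $q=(a-z)/(\tau-s)$ and $r=\tau-s$. Differentiating gives
\[
 Df_-=D\Phi(q),\qquad (f_-)_s=\Phi(q)-q\cdot D\Phi(q),
\]
\[
 D^2f_-=-r^{-1}D^2\Phi(q),\qquad D(f_-)_s=-r^{-1}D^2\Phi(q)q,
\]
\[
 D^3f_-=r^{-2}D^3\Phi(q).
\]
The inequality for $f_-$ in \eqref{eq:profile-effective} is then exactly \eqref{eq:cap-effective}. By \eqref{eq:cap-agreement} we have $|Df_-|<R_2$. By \eqref{eq:cap-derivatives}, the bounds \eqref{eq:comparison-derivatives} hold with $r=\tau-s$ and $\kappa=C(H,n,K)$. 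Here the factor $D^2\Phi(q)q$ in $D(f_-)_s$ is exactly why \eqref{eq:cap-derivatives} includes that term. Lemma~\ref{lem:residual} therefore gives \eqref{eq:profile-corrector}, and also
\[
 \Pe w_-\le (f_-)_s+\Hb(Df_-)+C\bigl(\varepsilon/r+\varepsilon^2/r^2\bigr)\le C\bigl(\varepsilon/r+\varepsilon^2/r^2\bigr).
\]

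\emph{Derivatives of $f_+$.} Put $q=(z-a)/(s+\tau)$ and $r=s+\tau$. Then
\[
 Df_+=D\Lb(q),\qquad (f_+)_s=\Lb(q)-q\cdot D\Lb(q),
\]
\[
 D^2f_+=r^{-1}D^2\Lb(q),\qquad D(f_+)_s=-r^{-1}D^2\Lb(q)q,\qquad D^3f_+=r^{-2}D^3\Lb(q).
\]
The identity \eqref{eq:effective-identity} gives $(f_+)_s+\Hb(Df_+)=0$. In the region $|q|<R_1$, the bound \eqref{eq:compact-derivative-bounds} controls $\Lb$ in $C^3(\overline B_{R_1})$, and the choice \eqref{eq:fixed-parameter-radius} gives $|D\Lb(q)|<R_2$. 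So \eqref{eq:comparison-derivatives} holds with a constant $\kappa$, using $|q|\le R_1$ for the term $D^2\Lb(q)q$. Lemma~\ref{lem:residual} then yields the two-sided bound on $|\Pe w_+|$ together with \eqref{eq:profile-corrector}.

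\emph{Time-shift estimate.} For \eqref{eq:time-shift-estimate}, consider $\phi(r)=r\Lb(b/r)$ on the segment between $r_1$ and $r_2$. This is the main obstacle, because we must check that $b/r$ stays in $\overline B_{R_1}$ along the whole segment. It does: $|b|\le R_1\min\{r_1,r_2\}\le R_1 r$ for every $r$ between $r_1$ and $r_2$. We have
\[
 \phi'(r)=\Lb(b/r)-(b/r)\cdot D\Lb(b/r),
\]
which is bounded by $\|\Lb\|_{C^1(\overline B_{R_1})}(1+R_1)\le C$. The mean value theorem then gives the claim. All constants depend only on $R_1,R_2$ and \eqref{eq:compact-derivative-bounds}, hence only on $(H,n,K)$, and not on $a,c,\tau,\varepsilon$.
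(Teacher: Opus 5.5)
Your proposal is correct and follows the paper's proof essentially step for step: you compute the profile derivatives, verify \eqref{eq:comparison-derivatives} from Lemma~\ref{lem:cap} and \eqref{eq:compact-derivative-bounds}, apply Lemma~\ref{lem:residual}, and bound $\frac{\dd}{\dd r}[r\Lb(b/r)]$ on the segment where $|b/r|\le R_1$ (the paper writes this derivative as $-\Hb(D\Lb(b/r))$; you bound it directly by $\|\Lb\|_{C^1(\overline B_{R_1})}$, which is equivalent). One harmless slip: the correct formula is $D(f_-)_s=+r^{-1}D^2\Phi(q)q$, not $-r^{-1}D^2\Phi(q)q$, but only its norm enters the argument.
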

\begin{proof}
Put $r=\tau-s$, $q=(a-z)/r$ for the minus profile.
By \eqref{eq:cap-effective},
\[
 (f_-)_s+\Hb(Df_-)
 =\Phi(q)-q\cdot D\Phi(q)+\Hb(D\Phi(q))
 \le0.
\]
For the plus profile put $r=s+\tau$, $q=(z-a)/r$;
\eqref{eq:effective-identity} gives
$(f_+)_s+\Hb(Df_+)=0$. The spatial derivatives are
\begin{align*}
 Df_-&=D\Phi(q),&D^2f_-&=-r^{-1}D^2\Phi(q),
 &D(f_-)_s&=r^{-1}D^2\Phi(q)q,\\
 Df_+&=D\Lb(q),&D^2f_+&=r^{-1}D^2\Lb(q),
 &D(f_+)_s&=-r^{-1}D^2\Lb(q)q.
\end{align*}
Their third derivative norms are respectively
$r^{-2}|D^3\Phi(q)|_{HS}$ and $r^{-2}|D^3\Lb(q)|_{HS}$.
Equation \eqref{eq:compact-derivative-bounds} and Lemma~\ref{lem:cap} give
\eqref{eq:comparison-derivatives}, with the fixed radius $R_2$
and a constant $\kappa=C(H,n,K)$ on the stated regions.
Lemma~\ref{lem:residual} proves
\eqref{eq:profile-corrector}--\eqref{eq:profile-residuals}.
Finally,
\[
 \frac{\dd}{\dd r}\,[r\Lb(b/r)]
 =-\Hb(D\Lb(b/r)).
\]
When $r$ lies between $r_1$ and $r_2$, $|b/r|\le R_1$.
Compactness and integration give
\eqref{eq:time-shift-estimate}.
\end{proof}

The order of choices is
\begin{equation}\label{eq:radius-chain}
 (H^*,n,K)\ \longrightarrow\ R\ \longrightarrow\ H
 \ \longrightarrow\ (R_1,R_2,\Phi)\ \longrightarrow\ C(H,n,K).
\end{equation}
The fixed $n,K$ are retained throughout. The radii $R_1,R_2$ may
exceed $R$; their bounds and all compact norms above are determined
by $H,n,K$, independently of $\varepsilon,x,t,\delta$.
Only the Lipschitz constant of $g$ enters the comparison estimates.

\subsection{Lower bound}\label{subsec:lower}
\begin{proposition}\label{prop:lower}
Let $u^\varepsilon,u$ be as in Theorem~\ref{thm:reduction}.
There is $C=C(H,n,K)$ such that, for all
$x\in\R^n$, $0<\varepsilon\le1$, and $0<\delta\le t$,
\begin{equation}\label{eq:master-lower}
 u(x,t)-u^\varepsilon(x,t)
 \le C\left[\delta+\varepsilon
       +\varepsilon\log(1+t/\delta)+\frac{\varepsilon^2}{\delta}\right].
\end{equation}
\end{proposition}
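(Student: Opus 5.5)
The plan is to place a corrected lower Hopf--Lax profile below $u^\varepsilon$ and evaluate it at the target point. Fix $(x,t)$ with $0<\delta\le t$, and set $\tau=t+\delta$ and $a=x$. In Lemma~\ref{lem:profiles} take
\[
 f_-(z,s)=c-(\tau-s)\Phi\!\left(\frac{x-z}{\tau-s}\right),\qquad c=u(x,\tau)-C_0\varepsilon,
\]
with $C_0$ to be chosen, and let $w_-=f_-+\varepsilon\chi(z/\varepsilon,Df_-)$. The candidate subsolution is
\[
 W(z,s)=w_-(z,s)-\int_0^s C\left(\frac{\varepsilon}{\tau-\sigma}+\frac{\varepsilon^2}{(\tau-\sigma)^2}\right)\dd\sigma,
\qquad 0\le s\le t.
\]
Here $C$ is the constant of \eqref{eq:profile-residuals}. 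Then \eqref{eq:profile-residuals} gives $\Pe W\le0$ on $\R^n\times(0,t]$. The profile is smooth there, since $\tau-s\ge\delta>0$. The time integral up to $s=t$ equals $C\varepsilon\log(1+t/\delta)+C\varepsilon^2(1/\delta-1/\tau)$, which produces the logarithmic and $\varepsilon^2/\delta$ terms of \eqref{eq:master-lower}.

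The key step is the initial inequality $W(z,0)\le g(z)$. By \eqref{eq:profile-corrector} it suffices to have $c-\tau\Phi(q)\le g(z)-C\varepsilon$ with $q=(x-z)/\tau$. I would split into two cases.

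\emph{Case $|q|\le R_1$.} Here $\Phi(q)=\Lb(q)$ by \eqref{eq:cap-agreement}. The Hopf--Lax formula \eqref{eq:HL} gives $u(x,\tau)\le g(z)+\tau\Lb(q)$, so the inequality holds once $C_0$ absorbs the corrector constant.

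\emph{Case $|q|>R_1$.} I would use \eqref{eq:cap-growth}, together with $u(x,\tau)\le g(x)+\tau\Lb(0)$ and $g(x)\le g(z)+K|x-z|$. These give
\[
 c-\tau\Phi(q)\le g(z)-|x-z|+\tau\Bigl(|\Lb(0)|+\sup_{|p|\le K+1}|\Hb(p)|\Bigr).
\]
The first inequality in \eqref{eq:fixed-velocity-radius}, combined with $|x-z|>R_1\tau$, makes the last two terms negative. This is exactly where the localization of the possible initial defect is used.

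Next comes comparison on $\R^n\times[0,t]$. The function $W$ is continuous and $C^{2,1}$. It is bounded above, because $\Phi$ is bounded below by \eqref{eq:cap-growth}, and it tends to $-\infty$ at most linearly in $|z|$. The function $u^\varepsilon$ is a bounded viscosity solution. Standard comparison for viscous Hamilton--Jacobi equations then applies. The Hamiltonian is globally Lipschitz on the relevant gradient range, since $|Df_-|<R_2$ and $\chi_p$ and $\chi_y$ are bounded; if necessary I would justify this with a penalization $-\eta\sqrt{1+|z|^2}$. Comparison gives $u^\varepsilon(x,t)\ge W(x,t)$.

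Finally I evaluate $W(x,t)$. Since $q=0$ at $(x,t)$, we get $f_-(x,t)=u(x,\tau)-C_0\varepsilon-\delta\Lb(0)$. Adding \eqref{eq:profile-corrector} and the time integral yields
\[
 u^\varepsilon(x,t)\ge u(x,t+\delta)-C\left[\delta+\varepsilon+\varepsilon\log(1+t/\delta)+\frac{\varepsilon^2}{\delta}\right].
\]
It remains to replace $u(x,t+\delta)$ by $u(x,t)$, with an error $C\delta$. Time Lipschitz continuity of $u$ follows from $\operatorname{Lip}_xu\le K$ in \eqref{eq:reduction-error} and boundedness of $\Hb$ on $\overline B_K$; concretely, $u(\cdot,t)\pm\max_{|p|\le K}|\Hb(p)|\,h$ are a super- and a subsolution of the effective equation, and comparison for time translates gives the bound.

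I expect the main obstacle to be the initial-ordering step in the region of large velocities, where $\Phi\ne\Lb$. I would resolve it as above, using the linear growth \eqref{eq:cap-growth} and the choice of $R_1$. A secondary technical point is justifying comparison for a subsolution that is unbounded below.
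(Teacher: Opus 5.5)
Your proof is correct and keeps the paper's skeleton: the same capped profile $c-(\tau-s)\Phi((x-z)/(\tau-s))$ with $\tau=t+\delta$, the same corrector, the same time-integrated residual from \eqref{eq:profile-residuals}, and the same use of \eqref{eq:cap-growth} with the first inequality in \eqref{eq:fixed-velocity-radius} to handle large velocities at $s=0$. The genuine difference is the normalizing constant.

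\begin{itemize}
\item The paper takes $c=u(x,t)+\delta\Lb(0)$. The profile then equals $u(x,t)$ exactly at the target, so no time regularity of $u$ is needed. The price is an $O(\delta)$ defect in the initial ordering. The paper first shows the defect can only occur where $|x-z|<tR_1$; this step uses $\delta\le t$ and the factor $2$ in $R_1$. It then compares $t\Lb((x-z)/t)$ with $(t+\delta)\Lb((x-z)/(t+\delta))$ via the time-shift estimate \eqref{eq:time-shift-estimate}.
\item You anchor at $u(x,t+\delta)$ instead. The Hopf--Lax formula at time $t+\delta$ then gives the initial ordering with no defect, and your argument never uses $\delta\le t$. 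The $O(\delta)$ error reappears at the end as $u(x,t)-u(x,t+\delta)$. You bound it by the time-Lipschitz continuity of $u$, which follows from $\operatorname{Lip}_xu\le K$ in \eqref{eq:reduction-error}. It also follows from the Hopf--Lax semigroup property together with $\inf_q\{\Lb(q)-K|q|\}>-\infty$.
\end{itemize}

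So the two routes trade \eqref{eq:time-shift-estimate} for a time-regularity bound on $u$. Both give \eqref{eq:master-lower} with constants depending only on $H,n,K$.

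One bookkeeping slip: in your large-velocity case, keep the $-C_0\varepsilon$ contained in $c$. As displayed, your bound only yields $c-\tau\Phi(q)<g(z)$, but you need the margin $-C\varepsilon$ to absorb the corrector. Restoring that term fixes it immediately.
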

\begin{proof}
\textbf{Construction.}
Fix $(x,t)$; the comparison variables are $z\in\R^n$ and $0\le s\le t$.
Use $R_1$ from \eqref{eq:fixed-velocity-radius}
and $\Phi$ from Lemma~\ref{lem:cap}.
In the introductory function $v_1$, replace $\Lb$
by $\Phi$ and regularize the remaining time. Thus, for $0\le s\le t$, define
\begin{equation}\label{eq:negative-cone}
 v_{1,\delta}(z,s)=u(x,t)+\delta\Lb(0)
           -(t+\delta-s)\Phi\!\left(\frac{x-z}{t+\delta-s}\right).
\end{equation}
The constant $\delta\Lb(0)$ makes the value at the target exact:
\[
 v_{1,\delta}(x,t)=u(x,t)+\delta\Lb(0)-\delta\Phi(0)=u(x,t).
\]
For a constant $C_0=C_0(H,n,K)$ to be chosen below, set
\begin{equation}\label{eq:lower-corrected}
 \begin{aligned}
 w_1(z,s)={}&v_{1,\delta}(z,s)
 +\varepsilon\chi(z/\varepsilon,Dv_{1,\delta}(z,s))
 -C_0(\delta+\varepsilon)\\
 &-C_0\int_0^s\left(\frac{\varepsilon}{t+\delta-\sigma}
                  +\frac{\varepsilon^2}{(t+\delta-\sigma)^2}\right)\dd\sigma.
 \end{aligned}
\end{equation}
The constant shift will enforce the initial comparison, and the
time integral will absorb the equation error.

\smallskip
\noindent\textbf{Checking the subsolution property.}
Lemma~\ref{lem:profiles}, with $\tau=t+\delta$, gives
\begin{equation}\label{eq:negative-subsolution}
 \partial_s v_{1,\delta}+\Hb(Dv_{1,\delta})\le0,
\end{equation}
and
\[
 \Pe\bigl(v_{1,\delta}+\varepsilon\chi(z/\varepsilon,Dv_{1,\delta})\bigr)
 \le C\left(\frac{\varepsilon}{t+\delta-s}
                    +\frac{\varepsilon^2}{(t+\delta-s)^2}\right).
\]
Taking $C_0\ge C$ in \eqref{eq:lower-corrected} gives $\Pe w_1\le0$
on $\R^n\times(0,t)$. Increasing $C_0$ in the boundary check below
preserves this inequality.

\smallskip
\noindent\textbf{Checking the boundary conditions.}
We show that
\begin{equation}\label{eq:initial-defect}
 v_{1,\delta}(z,0)\le g(z)+C\delta\qquad(z\in\R^n).
\end{equation}
There is nothing to check where $v_{1,\delta}(z,0)\le g(z)$.
At any other point, use
$u(x,t)\le g(x)+t\Lb(0)$, the Lipschitz bound for $g$, and
\eqref{eq:cap-growth}. They give
\[
 v_{1,\delta}(z,0)-g(z)
 \le -|x-z|+(t+\delta)
        \left(|\Lb(0)|+\sup_{|p|\le K+1}|\Hb(p)|\right)
 <-|x-z|+tR_1.
\]
Here $\delta\le t$ and \eqref{eq:fixed-velocity-radius} were used.
Consequently $|x-z|<tR_1$, and
\[
 \left|\frac{x-z}{\sigma}\right|<R_1
       \qquad(t\le\sigma\le t+\delta).
\]
On this set $\Phi=\Lb$. Using
$u(x,t)\le g(z)+t\Lb((x-z)/t)$ from \eqref{eq:HL}, we obtain
\[
 v_{1,\delta}(z,0)-g(z)
 \le \delta\Lb(0)
       +t\Lb\!\left(\frac{x-z}{t}\right)
       -(t+\delta)\Lb\!\left(\frac{x-z}{t+\delta}\right).
\]
The time-shift estimate \eqref{eq:time-shift-estimate}, with
$b=x-z$, $r_1=t$, and $r_2=t+\delta$, bounds the last two terms
by $C\delta$. This proves \eqref{eq:initial-defect}.
Now fix $C_0$ large enough also to dominate the constant in
\eqref{eq:initial-defect} and the corrector bound
\eqref{eq:profile-corrector}. Then
\[
 w_1(z,0)\le g(z)\qquad(z\in\R^n).
\]

By \eqref{eq:cap-growth} and the corrector bound
\eqref{eq:profile-corrector}, $w_1(z,s)\to-\infty$ as
$|z-x|\to\infty$, uniformly for $0\le s\le t$.
Since $u^\varepsilon$ is bounded on this time strip,
$w_1\le u^\varepsilon$ on $\partial B_L(x)\times[0,t]$
for all sufficiently large $L$.

\smallskip
\noindent\textbf{Comparison.}
Comparison on $B_L(x)\times[0,t]$ gives
$w_1(x,t)\le u^\varepsilon(x,t)$.
The radius $L$ does not enter the resulting estimate.
At the target, $v_{1,\delta}(x,t)=u(x,t)$. The remaining time integral is
\begin{equation}\label{eq:lower-integral}
 \int_0^t\left(\frac{\varepsilon}{t+\delta-s}
             +\frac{\varepsilon^2}{(t+\delta-s)^2}\right)\dd s
 =\varepsilon\log(1+t/\delta)
   +\varepsilon^2\left(\frac1\delta-\frac1{t+\delta}\right)
 \le\varepsilon\log(1+t/\delta)+\frac{\varepsilon^2}{\delta}.
\end{equation}
Together with \eqref{eq:lower-corrected} and
\eqref{eq:profile-corrector}, this proves \eqref{eq:master-lower}.
\end{proof}

\subsection{Upper bound}\label{subsec:upper}
\begin{proposition}\label{prop:upper}
Let $u^\varepsilon,u$ be as in Theorem~\ref{thm:reduction}.
There is $C=C(H,n,K)$ such that, for all
$x\in\R^n$, $0<\varepsilon\le1$, $t>0$, and $\delta\ge\varepsilon$,
\begin{equation}\label{eq:master-upper}
 u^\varepsilon(x,t)-u(x,t)
 \le C\left[\delta+\varepsilon
       +\varepsilon\log(1+t/\delta)+\frac{\varepsilon^2}{\delta}\right].
\end{equation}
\end{proposition}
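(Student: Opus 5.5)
Fix $(x,t)$ with $t>0$ and a Hopf--Lax minimizer $z_*$ from Lemma~\ref{lem:basic-comparison}, so that $u(x,t)=g(z_*)+t\Lb((x-z_*)/t)$ and $|x-z_*|<R_1t$ by \eqref{eq:minimizer-displacement}. The plan mirrors Proposition~\ref{prop:lower}, but uses the upper profile $v_2$ of the introduction with the singular time regularized:
\[
 v_{2,\delta}(z,s)=g(z_*)+(s+\delta)\Lb\!\left(\frac{z-z_*}{s+\delta}\right),
\]
together with the corrected function
\[
 w_2=v_{2,\delta}+\varepsilon\chi(z/\varepsilon,Dv_{2,\delta})+C_0(\delta+\varepsilon)+C_0\int_0^s\Bigl(\frac{\varepsilon}{\sigma+\delta}+\frac{\varepsilon^2}{(\sigma+\delta)^2}\Bigr)\dd\sigma .
\]
Lemma~\ref{lem:profiles}, applied with $a=z_*$, $c=g(z_*)$ and $\tau=\delta$, gives $\Pe w_2\ge0$ on the cone $\mathcal C=\{|z-z_*|<R_1(s+\delta)\}$ once $C_0$ is large. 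Only this cone is available, because there $D\Lb$ stays in the compact set where \eqref{eq:compact-derivative-bounds} holds. At the target, $(x,t)\in\mathcal C$, and the time-shift estimate \eqref{eq:time-shift-estimate} with $b=x-z_*$, $r_1=t$, $r_2=t+\delta$ gives $v_{2,\delta}(x,t)\le u(x,t)+C\delta$. Evaluating the integral exactly as in \eqref{eq:lower-integral} then yields \eqref{eq:master-upper}, provided $w_2(x,t)\ge u^\varepsilon(x,t)$.

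Outside the cone I will use an exterior supersolution with spatial slope $K+1$ and time slope $\gamma$ from \eqref{eq:exterior-slope}:
\[
 e(z,s)=g(z_*)+(K+1)\sqrt{(s+\delta)^2+|z-z_*|^2}+\gamma s+(\text{the same additive }C_0\text{ terms as in }w_2).
\]
Its time derivative is at least $\gamma$, its gradient has norm at most $K+1$, and $\Delta e\le n(K+1)/(s+\delta)$. Since $\delta\ge\varepsilon$, this gives $\tfrac\varepsilon2\Delta e\le\tfrac n2(K+1)$, and hence $\Pe e\ge1>0$. This is where the hypothesis $\delta\ge\varepsilon$ enters. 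Define $W=\min\{w_2,e\}$ on $\mathcal C$ and $W=e$ off $\mathcal C$.

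The key step is to show that $W$ is a continuous viscosity supersolution. This needs $e<w_2$ near $\partial\mathcal C$, so that the two branches glue as a minimum of supersolutions. On $|z-z_*|=R_1(s+\delta)$, the second condition in \eqref{eq:fixed-velocity-radius} applied at $q=(z-z_*)/(s+\delta)$ gives
\[
 (s+\delta)\Lb(q)>(K+1)\bigl(|z-z_*|+s+\delta\bigr)+(\gamma+1)(s+\delta),
\]
while $e$ is at most $g(z_*)+(K+1)(|z-z_*|+s+\delta)+\gamma s$ plus the shared shifts. The corrector perturbation in $w_2$ is only $O(\varepsilon)$, and the margin $(s+\delta)\ge\delta\ge\varepsilon$ should absorb it after adjusting constants. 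I expect this interface ordering to be the main obstacle: one must make sure that the $\varepsilon$-corrector error is dominated by the strict margin uniformly down to $s=0$.

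It remains to check the initial condition and conclude.
\begin{itemize}
\item Initially, $g(z)\le g(z_*)+K|z-z_*|\le e(z,0)$. Inside $\mathcal C$, the quadratic growth of $\Lb$ gives $\delta\Lb(q)-K\delta|q|\ge-C\delta$, so $w_2(z,0)\ge g(z)$ after enlarging $C_0$. Hence $W(\cdot,0)\ge g$.
\item Since $e$ grows linearly at infinity and $u^\varepsilon$ is bounded on $[0,t]$, comparison on large balls gives $u^\varepsilon\le W$.
\item At the target, $W(x,t)\le w_2(x,t)$, which gives the bound stated above.
\end{itemize}
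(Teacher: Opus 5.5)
Your construction is essentially the paper's: the same corrected cone profile inside $|z-z_*|<R_1(s+\delta)$, the same kind of exterior cone, the same minimum-gluing, and the same use of $\delta\ge\varepsilon$ to control $\tfrac\varepsilon2\Delta e$. However, the step you identify as the key one does not go through as you set it up.

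\textbf{Where the interface step fails.} You add the same shifts $C_0(\delta+\varepsilon)+C_0\int_0^s(\cdots)$ to $e$. They therefore cancel at the interface, and what remains to prove on $|z-z_*|=R_1(s+\delta)$ is
\[
 (s+\delta)\Lb(q)+\varepsilon\chi\bigl(z/\varepsilon,D\Lb(q)\bigr)
 >(K+1)\sqrt{(s+\delta)^2+|z-z_*|^2}+\gamma s .
\]
The only lower bound \eqref{eq:fixed-velocity-radius} gives for the gap between these terms, without the corrector, is $s+c_*\delta$. Here $c_*$ satisfies $\gamma+1\le c_*<\gamma+K+2$ and is fixed before $\chi$ enters. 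So at $s=0$ and $\delta=\varepsilon$ you would need $c_*\ge\sup|\chi|$ over $\T^n\times D\Lb(\partial B_{R_1})$, and nothing guarantees this. The constant $C_0$ cannot help, because it cancels. Enlarging the margin in the definition of $R_1$ is circular: the relevant corrector values are taken at momenta $D\Lb(q)$ with $|q|=R_1$, so they move with $R_1$. Breaking that circle would require a growth bound for $\chi$ in $p$, which \eqref{eq:compact-derivative-bounds} does not provide. So ``after adjusting constants'' is not a proof here.

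\textbf{How to repair it.} The paper's fix is to leave the exterior branch unshifted. Take $\widehat w=g(z_*)+(K+1)\sqrt{|z-z_*|^2+\delta^2}+\gamma s$; your square-root form $(K+1)\sqrt{(s+\delta)^2+|z-z_*|^2}$ also works. It already satisfies $\Pe\widehat w\ge1$ and $\widehat w(\cdot,0)\ge g$. The shift $C_0(\delta+\varepsilon)$ in the inner branch then absorbs the corrector, together with the $C\delta$ time-shift error in the paper's normalization (your normalization has no such error at the interface). This gives $w_2\ge g(z_*)+(s+\delta)\Lb(q)$ on the closed cone. The strict interface inequality then follows from \eqref{eq:fixed-velocity-radius} alone, with no use of $\varepsilon\le\delta$.

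Alternatively, you can keep your $e$ and run the argument with $M\delta$ in place of $\delta$. Choose a fixed $M$ with $M(\gamma+1)>\|\chi\|_{C^0(\T^n\times\overline B_{R_2})}$; this changes the right side of \eqref{eq:master-upper} only by a factor $M$. With either change, the rest of your argument is correct: the initial comparison, the large-ball comparison, and the evaluation at $(x,t)$ via the time-shift estimate.
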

\begin{proof}
\textbf{Construction.}
Fix $(x,t)$; the comparison variables are $z\in\R^n$ and $0\le s\le t$.
Choose a Hopf--Lax minimizer $z_*$ at $(x,t)$. By
\eqref{eq:HL} and \eqref{eq:minimizer-displacement},
\begin{equation}\label{eq:upper-minimizer}
 u(x,t)=g(z_*)+t\Lb\!\left(\frac{x-z_*}{t}\right),\qquad
 |x-z_*|<R_1t.
\end{equation}
For each comparison time $s$, define the inner region
\[
 \Omega_\delta(s)=B_{R_1(s+\delta)}(z_*),\qquad
 q_\delta(z,s)=\frac{z-z_*}{s+\delta}.
\]
On this region $|q_\delta|<R_1$, so the corrector and its
required derivatives are controlled by Section~\ref{subsec:calculation}.
We use a corrected effective profile there, and a separate
supersolution of linear spatial growth outside. The minimum used
in the inner region will coincide with the exterior branch near
the moving interface.

Set
\begin{equation}\label{eq:upper-profile}
 v_{2,\delta}(z,s)=u(x,t)+(s+\delta)\Lb(q_\delta(z,s))
 -(t+\delta)\Lb\!\left(\frac{x-z_*}{t+\delta}\right).
\end{equation}
Thus $v_{2,\delta}(x,t)=u(x,t)$. For a constant
$C_0=C_0(H,n,K)$ to be fixed below, define on
$\overline\Omega_\delta(s)$ the inner branch
\begin{equation}\label{eq:upper-corrected}
 \begin{aligned}
 w_2(z,s)={}&v_{2,\delta}(z,s)
 +\varepsilon\chi(z/\varepsilon,Dv_{2,\delta}(z,s))
 +C_0(\delta+\varepsilon)\\
 &+C_0\int_0^s\left(\frac{\varepsilon}{\sigma+\delta}
                  +\frac{\varepsilon^2}{(\sigma+\delta)^2}\right)\dd\sigma.
 \end{aligned}
\end{equation}
Using $\gamma$ from \eqref{eq:exterior-slope}, define the exterior
branch on all of $\R^n\times[0,t]$ by
\begin{equation}\label{eq:upper-exterior}
 \widehat w(z,s)=g(z_*)+(K+1)\sqrt{|z-z_*|^2+\delta^2}+\gamma s.
\end{equation}
Our comparison function is
\begin{equation}\label{eq:upper-patch}
 W_\delta(z,s)=
 \begin{cases}
  \min\{w_2(z,s),\widehat w(z,s)\},&z\in\Omega_\delta(s),\\
  \widehat w(z,s),&z\notin\Omega_\delta(s).
 \end{cases}
\end{equation}
We now choose $C_0$ and verify the supersolution and boundary
properties of this function.

\smallskip
\noindent\textbf{Checking the supersolution property.}
Lemma~\ref{lem:profiles}, with $\tau=\delta$, gives in the inner region
\[
 \partial_s v_{2,\delta}+\Hb(Dv_{2,\delta})=0,\qquad
 \left|\Pe\bigl(v_{2,\delta}
       +\varepsilon\chi(z/\varepsilon,Dv_{2,\delta})\bigr)\right|
 \le C\left(\frac\varepsilon{s+\delta}
       +\frac{\varepsilon^2}{(s+\delta)^2}\right).
\]
For $C_0$ sufficiently large, the positive time integral in
\eqref{eq:upper-corrected} therefore makes $\Pe w_2\ge0$
in the inner region.
For the exterior branch, direct differentiation gives
\[
 |D\widehat w|\le K+1,\qquad
 \Delta\widehat w\le\frac{n(K+1)}{\delta}.
\]
Since $\varepsilon\le\delta$, the choice of $\gamma$ implies
$\Pe\widehat w\ge1$ everywhere.
Within the inner region, the minimum of these two supersolutions
is again a viscosity supersolution: a smooth test function touching
the minimum from below touches an active branch from below.

To check the moving interface, we first control the normalization
of $w_2$.
The time-shift estimate \eqref{eq:time-shift-estimate}, with
$b=x-z_*$, $r_1=t$, and $r_2=t+\delta$, gives
\begin{equation}\label{eq:upper-target}
 \left|u(x,t)-g(z_*)
 -(t+\delta)\Lb\!\left(\frac{x-z_*}{t+\delta}\right)\right|
 \le C\delta.
\end{equation}
Taking $C_0$ also larger than the constant in this estimate and
the corrector bound \eqref{eq:profile-corrector} gives, throughout
the closed inner region,
\begin{equation}\label{eq:upper-branch-domination}
 w_2(z,s)\ge g(z_*)+(s+\delta)\Lb(q_\delta(z,s)).
\end{equation}
At $z\in\partial\Omega_\delta(s)$ we have $|q_\delta|=R_1$ and
\[
 \begin{aligned}
 \widehat w(z,s)
 &\le g(z_*)+(s+\delta)\bigl[(K+1)(R_1+1)+\gamma\bigr]\\
 &<g(z_*)+(s+\delta)\Lb(q_\delta(z,s))
 \le w_2(z,s),
 \end{aligned}
\]
by \eqref{eq:fixed-velocity-radius} and
\eqref{eq:upper-branch-domination}.
This strict inequality and continuity show that $W_\delta$ equals
$\widehat w$ in a neighborhood of the moving interface. Hence
$W_\delta$ is a continuous viscosity supersolution on
$\R^n\times(0,t)$.

\smallskip
\noindent\textbf{Checking the boundary conditions.}
For the initial boundary, the definition of the Legendre transform
implies
\[
 \Lb(q)\ge K|q|-\sup_{|p|\le K}|\Hb(p)|.
\]
Together with \eqref{eq:upper-target}, this gives
\[
 v_{2,\delta}(z,0)\ge g(z_*)+K|z-z_*|-C\delta
                         \ge g(z)-C\delta.
\]
Fix $C_0$ large enough also to absorb this error and the corrector's
value. Then $w_2(z,0)\ge g(z)$ on $\overline\Omega_\delta(0)$,
and all preceding supersolution and interface inequalities remain
valid. The Lipschitz bound for $g$ gives
$\widehat w(z,0)\ge g(z)$ everywhere. Thus $W_\delta(z,0)\ge g(z)$.

The exterior branch $\widehat w(z,s)$ tends to $+\infty$ as
$|z-z_*|\to\infty$, uniformly for $0\le s\le t$.
Since $u^\varepsilon$ is bounded on this strip, we can choose
$L>R_1(t+\delta)$ sufficiently large that
$W_\delta=\widehat w\ge u^\varepsilon$
on $\partial B_L(z_*)\times[0,t]$.

\smallskip
\noindent\textbf{Comparison.}
Comparison on $B_L(z_*)\times[0,t]$ gives
$u^\varepsilon\le W_\delta$ there. The target $(x,t)$ is in the
inner region by \eqref{eq:upper-minimizer}, and hence
\[
 u^\varepsilon(x,t)\le W_\delta(x,t)\le w_2(x,t).
\]
The radius of the comparison ball does not enter the estimates.
Finally,
\begin{equation}\label{eq:upper-integral}
 \int_0^t\left(\frac{\varepsilon}{s+\delta}
             +\frac{\varepsilon^2}{(s+\delta)^2}\right)\dd s
 =\varepsilon\log(1+t/\delta)
   +\varepsilon^2\left(\frac1\delta-\frac1{t+\delta}\right)
 \le\varepsilon\log(1+t/\delta)+\frac{\varepsilon^2}{\delta}.
\end{equation}
Together with $v_{2,\delta}(x,t)=u(x,t)$,
\eqref{eq:upper-corrected}, and \eqref{eq:profile-corrector}, this
proves \eqref{eq:master-upper}.
\end{proof}

\subsection{Conclusion}\label{subsec:conclusion}
\begin{proof}[Proof of Theorem~\ref{thm:global}]
For $t\ge\varepsilon$, take $\delta=\varepsilon$ in
Propositions~\ref{prop:lower} and \ref{prop:upper}. This gives
\begin{equation}\label{eq:uniform-rate}
 |u^\varepsilon(x,t)-u(x,t)|
 \le C(H,n,K)\varepsilon[1+\log(1+t/\varepsilon)].
\end{equation}
For $0\le t\le\varepsilon$, the heat bound \eqref{eq:heat-control} and
the Hopf--Lax formula \eqref{eq:HL} give
\[
 |u^\varepsilon(x,t)-g(x)|
 \le h_Kt+K\sqrt{n\varepsilon t},\qquad
 |u(x,t)-g(x)|\le Ct.
\]
The second bound follows from
$\inf_q\{\Lb(q)-K|q|\}>-\infty$ and $\Lb(0)<\infty$.
Thus \eqref{eq:uniform-rate} also holds at these times.
All constants depend only on $H,n,K$, by Section~\ref{subsec:calculation}.

Finally, Theorem~\ref{thm:reduction} yields
\[
 |U^\varepsilon(x,t)-U(x,t)|
 \le2K\varepsilon+|u^\varepsilon(x,t)-u(x,t)|
 \le C(H^*,n,K)\varepsilon[1+\log(1+t/\varepsilon)],
\]
because $H$ is fixed by $H^*,n,K$, independently of $\varepsilon$.
\end{proof}

\section*{Acknowledgments} GPT-6 Astra is used to assist in this study, while the author assumes full responsibility for the final content. Specifically, the initially incomplete proof was identified by this large language model after the author shared with them the idea of using the Hopf–Lax formula. This work is partially supported by the NSFC Grant No.\,12571220 and by the New Cornerstone Investigator Program 100001127.


\begin{thebibliography}{99}
\small
\setlength{\itemsep}{1.5pt}
\setlength{\parsep}{0pt}

\bibitem{AF}
A.~Abbondandolo and A.~Figalli,
\emph{High action orbits for Tonelli Lagrangians and superlinear
Hamiltonians on compact configuration spaces},
J. Differential Equations \textbf{234} (2007), no.~2, 626--653.
\href{https://doi.org/10.1016/j.jde.2006.10.015}{doi:10.1016/j.jde.2006.10.015}.

\bibitem{AT}
S.~N.~Armstrong and H.~V.~Tran,
\emph{Viscosity solutions of general viscous Hamilton--Jacobi equations},
Math. Ann. \textbf{361} (2015), nos.~3--4, 647--687.
\href{https://doi.org/10.1007/s00208-014-1088-5}{doi:10.1007/s00208-014-1088-5}.

\bibitem{Burago}
D.~Burago,
\emph{Periodic metrics},
Adv. Soviet Math. \textbf{9} (1992), 205--210.
\href{https://doi.org/10.1090/advsov/009/10}{doi:10.1090/advsov/009/10}.

\bibitem{CCM}
F.~Camilli, A.~Cesaroni, and C.~Marchi,
\emph{Homogenization and vanishing viscosity in fully nonlinear
elliptic equations: rate of convergence estimates},
Adv. Nonlinear Stud. \textbf{11} (2011), no.~2, 405--428.
\href{https://doi.org/10.1515/ans-2011-0210}{doi:10.1515/ans-2011-0210}.

\bibitem{CDI}
I.~Capuzzo-Dolcetta and H.~Ishii,
\emph{On the rate of convergence in homogenization of
Hamilton--Jacobi equations},
Indiana Univ. Math. J. \textbf{50} (2001), no.~3, 1113--1129.
\href{https://doi.org/10.1512/iumj.2001.50.1933}{doi:10.1512/iumj.2001.50.1933}.

\bibitem{CDq}
L.-P.~Chaintron and S.~Daudin,
\emph{Optimal rate of convergence in the vanishing viscosity
for quadratic Hamilton--Jacobi equations},
preprint, 2025.
\href{https://arxiv.org/abs/2502.09103}{arXiv:2502.09103}.

\bibitem{CD}
L.-P.~Chaintron and S.~Daudin,
\emph{Optimal rate of convergence in the vanishing viscosity
for uniformly convex Hamilton--Jacobi equations},
preprint, 2025.
\href{https://arxiv.org/abs/2506.13255}{arXiv:2506.13255}.

\bibitem{CG}
M.~Cirant and A.~Goffi,
\emph{Convergence rates for the vanishing viscosity approximation
of Hamilton--Jacobi equations: the convex case},
Indiana Univ. Math. J., to appear.
\href{https://arxiv.org/abs/2502.15495}{arXiv:2502.15495}.

\bibitem{CL83}
M.~G.~Crandall and P.-L.~Lions,
\emph{Viscosity solutions of Hamilton--Jacobi equations},
Trans. Amer. Math. Soc. \textbf{277} (1983), no.~1, 1--42.
\href{https://doi.org/10.1090/S0002-9947-1983-0690039-8}{doi:10.1090/S0002-9947-1983-0690039-8}.

\bibitem{CL84}
M.~G.~Crandall and P.-L.~Lions,
\emph{Two approximations of solutions of Hamilton--Jacobi equations},
Math. Comp. \textbf{43} (1984), no.~167, 1--19.
\href{https://www.ams.org/journals/mcom/1984-43-167/S0025-5718-1984-0744921-8/}{doi:10.1090/S0025-5718-1984-0744921-8}.

\bibitem{Evans89}
L.~C.~Evans,
\emph{The perturbed test function method for viscosity solutions
of nonlinear PDE},
Proc. Roy. Soc. Edinburgh Sect. A \textbf{111} (1989),
nos.~3--4, 359--375.
\href{https://doi.org/10.1017/S0308210500018631}{doi:10.1017/S0308210500018631}.

\bibitem{Evans92}
L.~C.~Evans,
\emph{Periodic homogenisation of certain fully nonlinear partial
differential equations},
Proc. Roy. Soc. Edinburgh Sect. A \textbf{120} (1992),
nos.~3--4, 245--265.
\href{https://doi.org/10.1017/S0308210500032121}{doi:10.1017/S0308210500032121}.

\bibitem{Evans10}
L.~C.~Evans,
\emph{Adjoint and compensated compactness methods for
Hamilton--Jacobi PDE},
Arch. Ration. Mech. Anal. \textbf{197} (2010), no.~3, 1053--1088.
\href{https://doi.org/10.1007/s00205-010-0307-9}{doi:10.1007/s00205-010-0307-9}.

\bibitem{Fleming64}
W.~H.~Fleming,
\emph{The convergence problem for differential games, II},
in \emph{Advances in Game Theory},
Ann. of Math. Stud. \textbf{52}, Princeton University Press,
1964, 195--210.
\href{https://doi.org/10.1515/9781400882014-013}{doi:10.1515/9781400882014-013}.

\bibitem{GJTZ}
X.~Guo, W.~Jing, H.~V.~Tran, and Y.~P.~Zhang,
\emph{Quantification of ergodicity for Hamilton--Jacobi equations
in a dynamic random environment},
preprint, 2026.
\href{https://arxiv.org/abs/2604.00315}{arXiv:2604.00315}.

\bibitem{HJ}
Y.~Han and J.~Jang,
\emph{Rate of convergence in periodic homogenization for convex
Hamilton--Jacobi equations with multiscales},
Nonlinearity \textbf{36} (2023), no.~10, 5279--5297.
\href{https://doi.org/10.1088/1361-6544/acf17c}{doi:10.1088/1361-6544/acf17c}.

\bibitem{HJMT}
Y.~Han, W.~Jing, H.~Mitake, and H.~V.~Tran,
\emph{Quantitative homogenization of state-constraint Hamilton--Jacobi
equations on perforated domains and applications},
Arch. Ration. Mech. Anal. \textbf{249} (2025), no.~2, Paper No.~18, 53~pp.
\href{https://doi.org/10.1007/s00205-025-02091-2}{doi:10.1007/s00205-025-02091-2}.

\bibitem{KL}
S.~Kim and K.-A.~Lee,
\emph{Higher order convergence rates in theory of homogenization III:
Viscous Hamilton--Jacobi equations},
J. Differential Equations \textbf{265} (2018), no.~10, 5384--5418.
\href{https://doi.org/10.1016/j.jde.2018.07.003}{doi:10.1016/j.jde.2018.07.003}.

\bibitem{LPV}
P.-L.~Lions, G.~Papanicolaou, and S.~R.~S.~Varadhan,
\emph{Homogenization of Hamilton--Jacobi equations},
unpublished manuscript, 1987.

\bibitem{LTY}
Z.~Liu, H.~V.~Tran, and Y.~Yu,
\emph{Sharp global and almost everywhere convergence rates for
periodic homogenization of viscous quadratic Hamilton--Jacobi equations},
preprint, 2026.
\href{https://arxiv.org/abs/2604.19948}{arXiv:2604.19948}.

\bibitem{MTY}
H.~Mitake, H.~V.~Tran, and Y.~Yu,
\emph{Rate of convergence in periodic homogenization of
Hamilton--Jacobi equations: the convex setting},
Arch. Ration. Mech. Anal. \textbf{233} (2019), no.~2, 901--934.
\href{https://doi.org/10.1007/s00205-019-01371-y}{doi:10.1007/s00205-019-01371-y}.

\bibitem{QSTY}
J.~Qian, T.~Sprekeler, H.~V.~Tran, and Y.~Yu,
\emph{Optimal rate of convergence in periodic homogenization of
viscous Hamilton--Jacobi equations},
Multiscale Model. Simul. \textbf{22} (2024), no.~4, 1558--1584.
\href{https://doi.org/10.1137/24M1642822}{doi:10.1137/24M1642822}.

\bibitem{Tran11}
H.~V.~Tran,
\emph{Adjoint methods for static Hamilton--Jacobi equations},
Calc. Var. Partial Differential Equations \textbf{41} (2011),
nos.~3--4, 301--319.
\href{https://doi.org/10.1007/s00526-010-0363-x}{doi:10.1007/s00526-010-0363-x}.

\bibitem{TranBook}
H.~V.~Tran,
\emph{Hamilton--Jacobi Equations: Theory and Applications},
Graduate Studies in Mathematics \textbf{213},
American Mathematical Society, Providence, RI, 2021.
\href{https://bookstore.ams.org/gsm-213/}{ISBN 978-1-4704-6511-7}.

\bibitem{TY}
H.~V.~Tran and Y.~Yu,
\emph{Optimal convergence rate for periodic homogenization of
convex Hamilton--Jacobi equations},
Indiana Univ. Math. J. \textbf{74} (2025), no.~3, 555--573.
\href{https://doi.org/10.1512/iumj.2025.74.60267}{doi:10.1512/iumj.2025.74.60267}.

\bibitem{Zeidler}
E.~Zeidler,
\emph{Applied Functional Analysis: Main Principles and Their Applications},
Applied Mathematical Sciences \textbf{109},
Springer, New York, 1995.
\href{https://doi.org/10.1007/978-1-4612-0821-1}{doi:10.1007/978-1-4612-0821-1}.

\end{thebibliography}
\end{document}